
\documentclass[reqno]{amsart}


\usepackage{amssymb,amsfonts,amsmath,amsthm}
\usepackage{mathrsfs}
\usepackage{graphicx}
\usepackage{color}
\usepackage{verbatim}


\newtheorem{thm}{Theorem}[section]

\newtheorem{lem}[thm]{Lemma}
\newtheorem{cor}[thm]{Corollary}

\theoremstyle{definition}

\newtheorem*{ackn}{Acknowledgements}

\theoremstyle{remark}
\newtheorem{rmk}[thm]{Remark}
\newtheorem*{cla}{Claim}

\numberwithin{equation}{section}


\def\L{\mathscr{L}}

\def\AA{\mathcal{A}}
\def\BB{\mathcal{B}}
\def\V{\mathcal{V}}
\def\XX{\mathcal{X}}
\def\YY{\mathcal{Y}}
\def\eps{\varepsilon}
\def\es{\varnothing}

\def\ol#1{\overline{#1}}

\def\nn{\mathbf{n}}
\def\tb{\mathsf{t}}
\def\uu{\mathsf{u}}
\def\vv{\mathsf{v}}
\def\ww{\mathsf{w}}

\def\ut{\widetilde{\mathsf{u}}}
\def\uh{\widehat{\mathsf{u}}}
\def\id{\approx}

\begin{document}


\title[Finite bands are finitely related]%
{Finite bands are finitely related} 


\author{IGOR DOLINKA}

\address{Department of Mathematics and Informatics, University of Novi Sad, Trg Dositeja Obradovi\'ca 4, 
21101 Novi Sad, Serbia}

\email{dockie@dmi.uns.ac.rs}

\thanks{This research was supported by the Ministry of Education, Science, and Technological Development
of the Republic of Serbia through the grant No.174019.}


\subjclass[2010]{Primary 20M07; Secondary 03C05, 08A40, 20M05}


\keywords{Semigroup; Idempotent semigroup; Band; Finitely related algebra}




\begin{abstract}
We prove that every finite idempotent semigroup (band) is finitely related, which means that the clone of its
term operations (i.e.\ operations induced by words) is determined by finitely many relations. This solves an 
open problem posed by Peter Mayr in 2013.
\end{abstract}


\maketitle


\section{Introduction}
\label{sec:intro}

Let $\mathbf{A}=(A,\mathcal{F})$ be an algebra, where $\mathcal{F}$ is a family of finitary operations on the 
set $A$. As is well known, any term $\tb(x_1,\dots,x_n)$ (of the same similarity type as $\mathbf{A}$) induces, by
interpretation in $\mathbf{A}$, an operation $\tb^\mathbf{A}:A^n\to A$. Operations obtained in this way are the
\emph{term operations} of $\mathbf{A}$, and the collection of all term operations of this algebra is denoted by 
$\mathrm{Clo}(\mathbf{A})$ and called the \emph{clone} of $\mathbf{A}$.

For an $n$-ary operation $f$ on a set $X$ ($f:X^n\to X$) and a $k$-ary relation $\rho$ on the same set ($\rho
\subseteq X^k$) we say that $f$ \emph{preserves} $\rho$ if for any $\ol{a}_1,\dots,\ol{a}_k\in X^n$ (where
$\ol{a}_i=(a_{i,1},\dots,a_{i,n})$ for $1\leq i\leq k$) such that  
$ (a_{1,j},\dots,a_{k,j})\in\rho $
for all $1\leq j\leq n$ we have $(f(\ol{a}_1),\dots,f(\ol{a}_k))\in\rho$; in other words, 
either $\rho=\es$ or $\rho$ is a subalgebra of the $k$th direct power of the algebra $(X,f)$. For 
a family $\mathcal{R}$ of (finitary) relations on $X$ we denote by $\mathrm{Pol}(\mathcal{R})$ the set of all
operations on $X$ preserving all relations from $\mathcal{R}$ -- these are the \emph{polymorphisms} of the
relational structure $(X,\mathcal{R})$. Conversely, if $\mathcal{O}$ is a family of operations on $X$, the set 
of all relations preserved by all operations from $\mathcal{O}$ is denoted by $\mathrm{Inv}(\mathcal{O})$. A
foundational result in clone theory tells us that, for finite $\mathbf{A}$, $\mathrm{Clo}(\mathbf{A})=
\mathrm{Pol}(\mathrm{Inv}(\mathcal{F}))$, and in this sense the clone of $\mathbf{A}$ is determined by the 
collection of relations $\mathrm{Inv}(\mathcal{F})$: an operation on $A$ arises from a term if and only if
it preserves all the relations that are preserved by $\mathcal{F}$.

The set of relations $\mathrm{Inv}(\mathcal{F})$ is always infinite. However, it may happen that there is in fact
a \emph{finite} subset $\mathcal{R}\subseteq\mathrm{Inv}(\mathcal{F})$ such that $\mathrm{Clo}(\mathbf{A})=
\mathrm{Pol}(\mathcal{R})$, so that the clone of $\mathbf{A}$ is determined by a finite set of relations (which
then can be reduced to a single relation). In such a case we say that the algebra $\mathbf{A}$ is \emph{finitely 
related}. 

Finitely related algebras recently received a lot of attention in universal algebra \cite{BS} and its applications 
in computer science because of their link, exhibited e.g.\ by \cite{AMM,Few}, to other key algebraic properties
pertinent to the algebraic approach in studying the computation complexity of constraint satisfaction problems
(CSP) \cite{BJK}. See also \cite{Ai,Ba,Ba-JEMS,MMM} for some recent developments and important results regarding finitely 
related general algebras. For example, a major result of Aichinger et al.\ \cite{AMM} implies that any finite 
algebra having a Mal'cev term operation (including all finite groups and rings) is finitely related, a 
consequence of which is that there are only countably many Mal'cev clones on any finite set.

Concerning the study of finitely related (finite) semigroups, the seminal paper is the one of Davey et al.\
\cite{DJPSz}, where it was shown (among other things) that finite semigroups that are either commutative or 
nilpotent enjoy the finitely related property. This paper was followed by an extensive study of Mayr \cite{Ma},
who exhibited the first example of a non-finitely related finite semigroup (not too surprisingly, this was
the `infamous' six-element Brandt monoid $B_2^1$, which seems to behave badly with respect to almost any 
conceivable equational property of semigroup varieties). Also, Mayr proved that any finite \emph{regular} band
(an idempotent semigroup satisfying the identity $xyxzx\id xyzx$) is finitely related. 
The question of whether \emph{all} finite bands are finitely related is left as an open problem (Problem 6.3); 
the same question is mentioned following Problem 7.2 in \cite{GAIA}. It is exactly this problem that we aim to 
address in the present paper; namely, we prove the following main result.

\begin{thm}\label{main}
Let $S$ be a finite idempotent semigroup. Then $S$ is finitely related.
\end{thm}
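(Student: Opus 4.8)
The plan is to work with the standard reformulation of finite relatedness in terms of local interpolation by term operations. Writing $\mathrm{Inv}^{(k)}(S)$ for the invariant relations of arity at most $k$, an unwinding of the $\mathrm{Pol}$--$\mathrm{Inv}$ Galois connection shows that, for a finite algebra, $S$ is finitely related precisely when there is a bound $k$ with the following property: for every $n$ and every $f\colon S^n\to S$, if $f$ agrees with some term operation on each $k$-element subset of $S^n$, then $f$ is itself a term operation. Indeed, for points $p_1,\dots,p_k\in S^n$ the subalgebra of $S^k$ generated by the $n$ rows $(p_{1,i},\dots,p_{k,i})$ is exactly $\{(g(p_1),\dots,g(p_k)):g\in\mathrm{Clo}(S)\}$, so preserving all at most $k$-ary invariant relations is the same as such local interpolation; and since for fixed $k$ there are only finitely many subalgebras of $S^k$, a uniform bound yields a finite defining set (reducible to one relation, as noted above). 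Thus the whole task is to produce, from the structure of $S$, a bound $k=k(S)$ \emph{independent of $n$} for which local term-interpolation on $k$ points forces global term-ness.

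The engine is the structure theory of bands together with the finiteness of free bands. I would recall that $S$ is a semilattice $Y=S/\D$ of rectangular bands $S_\alpha\cong L_\alpha\times R_\alpha$, and that by Green and Rees the free band on finitely many generators is finite, so every term operation of $S$ is induced by a word and is pinned down combinatorially by its content and its iterated initial and final segments (the Green--Rees normal form). First I would exploit the semilattice quotient: the kernel $\eta$ of the projection $S\twoheadrightarrow Y$ is a binary invariant relation, so an $f$ interpolable on small sets induces a well-defined operation $\bar f$ on $Y$; since semilattices are finitely related \cite{DJPSz}, for $k$ past a first threshold $\bar f$ is a semilattice term, which assigns to $f$ a definite content $c(f)\subseteq\{1,\dots,n\}$ and governs which coordinates matter at the coarse level.

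With the content fixed, I would control the rectangular structure. On the left-zero and right-zero reducts, the value of $f$ on a tuple is determined by the first- and last-occurrence orders of the coordinates lying in the relevant content, so by interpolating on suitably chosen $k$-element sets of points --- those isolating these occurrence patterns --- one forces $f$ to coincide on every tuple with a fixed word $w$ assembled from this occurrence data. Because the free band is finite, the depth of the iterated initial and final segments of $w$ is bounded in terms of $|S|$ alone, so only boundedly many coordinates of any tuple are ever consulted, and this is what keeps a single finite $k$ sufficient. To glue the pieces I would induct along the semilattice $Y$: an ideal of $S$ (a down-set of $\D$-classes) is again a band and restricting to it strictly reduces the content, so by induction the behaviour of $f$ below the top is already term-definable, and it remains only to match $f$ on the top $\D$-class --- a rectangular band --- where the $\L$/$\R$-interpolation closes the argument.

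The main obstacle is securing the bound $k$ uniformly in $n$ for \emph{non-regular} bands. Mayr's treatment of regular bands \cite{Ma} rests on the identity $xyxzx\id xyzx$, which collapses the iterated segment structure and keeps the interpolation certificates short; without regularity the Green--Rees normal form is genuinely recursive, and a naive argument would let the number of consulted coordinates grow with $n$. The crux is therefore a locality lemma: the value $f(\bar a)$ must be shown to depend only on the restriction of $\bar a$ to a set of coordinates whose size is bounded by a function of $|S|$, dictated by the first- and last-occurrence structure. Establishing this --- and thereby certifying term-ness by a subpower of $S$ of bounded arity --- is where the real work lies; the semilattice induction and the finiteness of the free band are precisely the tools I would use to prevent that bound from depending on $n$.
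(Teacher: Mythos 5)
Your opening reformulation of finite relatedness via $k$-point interpolation is correct, but the proposal is not a proof: you yourself defer the decisive step (the ``locality lemma'') as ``where the real work lies'', and that lemma is false in the form you state it. Already for the two-element semilattice, the value of the word operation $x_1\cdots x_n$ at the all-ones tuple is not determined by its restriction to any proper subset of coordinates: flipping any omitted coordinate to $0$ changes the value. More generally, every essential word operation on a band whose variety contains $\mathcal{SL}$ depends on all $n$ of its variables, so no set of ``consulted coordinates'' of size bounded by a function of $|S|$ alone can exist. The bounded-depth intuition conflates two alphabets: for a word $\ww$ with $c(\ww)=X_n$, the Green--Rees recursion has depth growing with $n$; what is bounded is the normal form of the \emph{evaluated} tuple read as a word over the alphabet $S$ --- but computing that normal form requires inspecting all coordinates, and since this boundedness holds for \emph{every} word operation, it cannot serve as a criterion forcing an interpolable $f$ to be a term operation.

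What is needed in place of locality is a consistency (gluing) argument, and the paper obtains it by a different route from your induction along $Y=S/\D$ (which, incidentally, is ill-posed as stated: a finite semilattice need not have a greatest element, so there may be no ``top $\D$-class'', and you give no mechanism for matching the words inducing $f$ on different ideals). The paper first reduces, via the fact that finite relatedness depends only on the generated variety (Theorem 2.11 of \cite{DJPSz}), to a fixed list of bands generating the join-irreducible varieties $\AA_m$, $\BB_m$, their duals, and certain two-factor products; it then inducts on $m$ along the lattice of band varieties, with Mayr's regular-band theorem \cite{Ma} as base. In the inductive step, the scheme of identification minors $\{\ww_{ij}\}$ is fed to the inductive hypothesis (for $\ol{\XX_{m-1}}$, and again for the derived scheme of prefixes $s(\ww_{ij})$) to produce words $\ut$ and $\uh$, from which an explicit candidate $\ww=s(\ww_{k'l'})x_k\ut x_l\uh$ is assembled; the identities $\ww^{(pq)}\id\ww_{pq}$ are then verified using the Gerhard--Petrich word functions $h_m,i_m$ of \cite{GP} that solve the word problem in $\AA_m,\BB_m$, and the global equality $f=\ww^{T_m}$ follows by pigeonhole on a repeated coordinate --- not from any locality of $f$. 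An argument engaging the equational theory of the non-regular band varieties at this level is exactly what your outline is missing.
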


Here is the brief outline of the paper. The next, preliminary section is divided into three parts. First, we are
going to invoke few criteria (from \cite{DJPSz,Ma}) for a finite algebra to be finitely related. Along the way,
we are going to introduce a handy concept of an $n$-scheme of terms, and specialise all these concepts to 
semigroups and words, respectively. Then we are going to review the lattice of all varieties of bands (idempotent 
semigroups) and proceed to a full effective description of their equational theories. Finally, we complete
the second section by some auxiliary results that will be used in the proof of the previous theorem, presented
in the third section. This proof uses induction on the `height' of the variety that $S$ generates in the lattice
of all band varieties, depicted in Fig.\ \ref{fig:bandv}, taking the mentioned result on regular bands as the
induction basis. We first resolve the case when $S$ generates one of the irreducible varieties in that 
lattice (Theorem \ref{thm:Tn}) and then demonstrate how to derive Theorem \ref{main} in its full generality from 
the irreducible case (Theorem \ref{thm:prod}).


\section{Preliminaries}
\label{sec:prelim}

\subsection{Term schemes and criteria for finitely related finite algebras}
\label{subsec:criteria}

Let $f:A^n\to A$ be an $n$-ary operation on the set $A$. For $i,j\in\nn=\{1,\dots,n\}$, $i<j$, we define 
an operation $f_{ij}:A^n\to A$ (sometimes called an \emph{identification minor} of $f$) by
$$
f_{ij}(x_1,\dots,x_n) = f(x_1,\dots,x_{i-1},x_j,x_{i+1},\dots,x_j,\dots,x_n);
$$
in other words, $f_{ij}$ is obtained from $f$ by identifying the variable $x_i$ with $x_j$. Similarly, if
$\tb=\tb(x_1,\dots,x_n)$ is a term of a given similarity type, then by $\tb^{(ij)}$ we denote the term
obtained from $\tb$ by replacing each occurrence of $x_i$ in it by $x_j$; furthermore, by convention we
define $\tb^{(ji)}$ to be $\tb^{(ij)}$.

As usual, we say that an operation $f:A^n\to A$ \emph{depends} on its $i$th variable $x_i$, $i\in\nn$, if there
exist $a_1,\dots,a_{i-1},a_{i+1},\dots,a_n,b,c\in A$ such that
$$
f(a_1,\dots,a_{i-1},b,a_{i+1},\dots,a_n) \neq f(a_1,\dots,a_{i-1},c,a_{i+1},\dots,a_n).
$$
For example, the identification minor $f_{ij}$ does not depend on $x_i$. In turn, as another example, the $i$th 
projection operation fails to depend on any of its variables but $x_i$.

Now let $\V$ be a variety and assume 
$$\mathcal{S}=\{\tb_{ij}:\ 1\leq i<j\leq n\}$$
is a family of terms over $X_n=\{x_1,\dots,x_n\}$ of the similarity type of $\V$ satisfying the following 
conditions:
\begin{itemize}
\item[(D)] For each $\mathbf{A}\in\V$, the term operation $\tb_{ij}^{\mathbf{A}}(x_1,\dots,x_n)$ does not 
depend on the variable $x_i$.
\item[(C1)] For any four distinct $1\leq i,j,p,q\leq m$ such that $i<j$ and $p<q$, $\V$ satisfies the identity
$$ \tb_{ij}^{(pq)} \id \tb_{pq}^{(ij)}. $$
\item[(C2)] For any three $1\leq i<j<k\leq n$, $\V$ satisfies the identities
$$ \tb_{ij}^{(jk)} \id \tb_{jk}^{(ik)} \id \tb_{ik}^{(jk)}. $$
\end{itemize} 
The condition (D) is called \emph{dependency}, while (C1) and (C2) are the \emph{consistency} conditions; the
family $\mathcal{S}$ is called an \emph{$n$-scheme of terms} for $\V$. Bearing in mind Definition 2.3 and 
Notation 2.7 from \cite{DJPSz} it is an easy exercise to see that this is just equivalent to the notion of 
an $(n,n-1)$-scheme introduced in that paper.

Similarly, it is very easy to see that the following holds.

\begin{lem}\label{lem:scheme}
Let $\V$ be an arbitrary variety and $\tb(x_1,\dots,x_n)$ a term of the same similarity type as $\V$. The
family of all identification minors of $\tb$,
$$
\{\tb^{(ij)}:\ 1\leq i<j\leq n\},
$$
is an $n$-scheme of terms for $\V$.
\end{lem}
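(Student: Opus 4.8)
The plan is to verify conditions (D), (C1), and (C2) directly for the family $\mathcal{S}=\{\tb_{ij}:1\le i<j\le n\}$ with $\tb_{ij}:=\tb^{(ij)}$. All three turn out to be purely syntactic facts about iterated variable substitutions, so the main work is bookkeeping rather than any genuine algebra.

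First I would dispose of (D). By definition $\tb^{(ij)}$ is obtained from $\tb$ by replacing every occurrence of $x_i$ with $x_j$, so the variable $x_i$ does not occur in $\tb^{(ij)}$ at all. Since a term in which a variable is syntactically absent induces, in any algebra whatsoever, an operation that is constant in that coordinate, the term operation $\tb_{ij}^{\mathbf{A}}$ cannot depend on $x_i$ for any $\mathbf{A}\in\V$ (indeed for any $\mathbf{A}$). This establishes (D).

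For (C1) and (C2) the clean way to proceed is to regard $(\cdot)^{(ij)}$ as the endomorphism $\sigma_{ij}$ of the absolutely free term algebra that sends $x_i\mapsto x_j$ and fixes every other variable, so that $\tb^{(ij)}=\sigma_{ij}(\tb)$ and, more generally, $\tb_{ij}^{(pq)}=\sigma_{pq}\bigl(\sigma_{ij}(\tb)\bigr)$. For (C1), where $i,j,p,q$ are pairwise distinct, the two substitutions $\sigma_{ij}$ and $\sigma_{pq}$ act on disjoint sets of variables and neither introduces a variable moved by the other; hence they commute, $\sigma_{pq}\sigma_{ij}=\sigma_{ij}\sigma_{pq}$, and $\tb_{ij}^{(pq)}$ and $\tb_{pq}^{(ij)}$ are literally the same term. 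The corresponding identity therefore holds in every variety, \emph{a fortiori} in $\V$.

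For (C2), fix $i<j<k$ and compute each of the three terms as a composite substitution, keeping in mind the convention $\tb^{(ji)}=\tb^{(ij)}$. One checks that $\sigma_{jk}\sigma_{ij}$, $\sigma_{ik}\sigma_{jk}$, and $\sigma_{jk}\sigma_{ik}$ all send $x_i\mapsto x_k$ and $x_j\mapsto x_k$ while fixing the remaining variables; for instance, in $\sigma_{jk}\sigma_{ij}$ the first step carries $x_i$ to $x_j$, which the second step then carries on to $x_k$. Thus $\tb_{ij}^{(jk)}$, $\tb_{jk}^{(ik)}$, and $\tb_{ik}^{(jk)}$ are one and the same term, namely the result of replacing both $x_i$ and $x_j$ by $x_k$ in $\tb$, so the identities of (C2) again hold syntactically. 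Since (D), (C1), and (C2) are all verified, $\mathcal{S}$ is an $n$-scheme of terms for $\V$. The only point demanding any care---and the closest thing to an obstacle---is tracking the composition order of the substitutions in (C2), where the output of one substitution is fed into the next; once one notes that the variable $x_j$ produced by $\sigma_{ij}$ is itself moved along by $\sigma_{jk}$, the coincidence of all three terms is immediate.
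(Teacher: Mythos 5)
Your proof is correct: the verification of (D) via the syntactic absence of $x_i$ in $\tb^{(ij)}$, and of (C1), (C2) by checking that the composed substitutions agree on variables (so the relevant terms are literally identical, hence the identities hold in every variety), is exactly the routine argument intended by the paper, which states this lemma without proof as something ``very easy to see.'' Your careful handling of the composition order in (C2) -- noting that the $x_j$ produced by $\sigma_{ij}$ is then carried to $x_k$ -- is precisely the one point where care is needed, and you got it right.
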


Again following \cite{DJPSz}, we say that an $n$-scheme $\mathcal{S}=\{\tb_{ij}:\ 1\leq i<j\leq m\}$ for $\V$ 
\emph{comes from the term} $\tb$ if $\mathcal{S}$ is $\V$-equivalent to the family of all identification minors
of $\tb$ in the sense that $\V$ satisfies the identities
$$ \tb_{ij} \id \tb^{(ij)} $$
for all $1\leq i<j\leq n$.

The following characterisation is part of Theorem 2.9 from \cite{DJPSz}, see also \cite{Ja,Ma,Ro,RS}.

\begin{thm}\label{thm:finrel}
Let $\mathbf{A}$ be a finite algebra, and let $\V$ be the variety generated by $\mathbf{A}$. 
The following are equivalent: 
\begin{itemize}
\item[(1)] $\mathbf{A}$ is finitely related.
\item[(2)] There exists $n_0\geq|A|$ such that for all $n>n_0$, every $n$-scheme of terms for $\V$ comes from 
a term.
\item[(3)] There exists $n_0\geq|A|$ such that for all $n>n_0$, an operation $f:A^n\to A$ is a term operation
of $\mathbf{A}$ whenever all minors $f_{ij}$ ($i,j\in\nn$, $i<j$) are term operations of $\mathbf{A}$.
\end{itemize}
\end{thm}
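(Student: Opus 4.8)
The plan is to establish the two equivalences $(2)\Lra(3)$ and $(1)\Lra(3)$ separately, the common engine being the elementary but crucial pigeonhole observation that, as soon as $n>|A|$, every tuple $\ol a=(a_1,\dots,a_n)\in A^n$ has a repeated coordinate $a_i=a_j$ with $i<j$, whence $f(\ol a)=f_{ij}(\ol a)$ for any operation $f\colon A^n\to A$. Consequently an $n$-ary operation with $n>|A|$ is completely recovered from the family $\{f_{ij}\}$ of its identification minors. I will also use throughout that, since $\V$ is generated by $\mathbf A$, an identity $\tb\id\uu$ holds in $\V$ if and only if $\tb^{\mathbf A}=\uu^{\mathbf A}$ as operations on $A$; this lets me pass freely between identities in $\V$ and equalities of term operations on $\mathbf A$, and it explains why the threshold is required to satisfy $n_0\ge|A|$.

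For $(3)\Ra(2)$, given an $n$-scheme $\mathcal S=\{\tb_{ij}\}$ with $n>n_0$, I interpret it in $\mathbf A$ to obtain operations $g_{ij}=\tb_{ij}^{\mathbf A}$, each independent of $x_i$ by (D), and I glue them into a single $f\colon A^n\to A$ by setting $f(\ol a)=g_{ij}(\ol a)$ for some pair with $a_i=a_j$; the consistency conditions (C1) and (C2) are exactly what is needed to see that this is independent of the chosen repeated pair, so $f$ is well defined. A short computation then shows $f_{ij}=g_{ij}$ for every $i<j$, so all minors of $f$ are term operations; by (3), $f=\tb^{\mathbf A}$ for a term $\tb$, and comparing minors yields $\V\models\tb_{ij}\id\tb^{(ij)}$, i.e.\ $\mathcal S$ comes from $\tb$. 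The converse $(2)\Ra(3)$ runs in reverse: from an $f$ all of whose minors $f_{ij}=\tb_{ij}^{\mathbf A}$ are term operations, I check that $\{\tb_{ij}\}$ satisfies (D), (C1), (C2) — each reducing, via the principle above, to the order-independence of iterated variable identifications in $f$ — so it is a scheme; applying (2) produces a term $\tb$ with $(\tb^{\mathbf A})_{ij}=f_{ij}$ for all $i<j$, and pigeonhole recovery forces $f=\tb^{\mathbf A}$.

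For $(1)\Ra(3)$, suppose $\mathrm{Clo}(\mathbf A)=\mathrm{Pol}(\mathcal R)$ with $\mathcal R$ finite, and set $n_0=\max\bigl(|A|,\max_{\rho\in\mathcal R}|\rho|\bigr)$. Given $f$ of arity $n>n_0$ with all minors term operations, I verify that $f$ preserves each $\rho\in\mathcal R$: in the defining matrix there are $n>|\rho|$ columns lying in $\rho$, so two of them, in positions $p<q$, coincide; then applying $f$ to the rows agrees with applying the minor $f_{pq}$, which is a term operation and hence preserves $\rho$, so the output tuple lies in $\rho$. Thus $f\in\mathrm{Pol}(\mathcal R)=\mathrm{Clo}(\mathbf A)$. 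For the harder direction $(3)\Ra(1)$, I take $\mathcal R$ to be the (finite) set of all invariant relations of arity at most $N:=|A|^{n_0}$ and prove $\mathrm{Pol}(\mathcal R)\subseteq\mathrm{Clo}(\mathbf A)$ by induction on the arity $n$. The base case $n\le n_0$ uses the classical fact that the ``free-algebra'' relation $\theta_n\subseteq A^{A^n}$ — the subalgebra of $\mathbf A^{A^n}$ generated by the coordinate projections — has arity $|A|^n\le N$ and detects membership in $\mathrm{Clo}(\mathbf A)$ exactly; for $n>n_0$ I note that each minor $f_{ij}$ again preserves $\mathcal R$ (minors of polymorphisms are polymorphisms) and is essentially $(n-1)$-ary, hence a term operation by the inductive hypothesis, so (3) delivers $f\in\mathrm{Clo}(\mathbf A)$.

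I expect the main obstacle to be the bookkeeping in $(2)\Lra(3)$: precisely matching the index conventions of (C1) and (C2) with the order-independence of double and triple variable identifications of $f$, and confirming that the glued operation is well defined at tuples carrying several repeated pairs. The relational step $(3)\Ra(1)$ is conceptually the deepest, but once the role of $\theta_n$ and the inductive reduction to minors are in place it is routine. I would, however, note that all of this is already contained in Theorem 2.9 of \cite{DJPSz} once the present notion of $n$-scheme is identified with their $(n,n-1)$-scheme, so within the paper the statement may simply be cited.
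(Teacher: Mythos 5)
The paper offers no proof of this theorem at all: it is stated as a quotation of (part of) Theorem 2.9 of \cite{DJPSz}, with pointers to \cite{Ja,Ma,Ro,RS}, so your closing remark that the statement ``may simply be cited'' is exactly what the author does, the only thing to check being the identification of the present $n$-schemes with the $(n,n-1)$-schemes of \cite{DJPSz}, which the paper notes separately. Your blind reconstruction is nevertheless correct and amounts to the standard proof from the literature. For $(2)\Lra(3)$ your engine is the pigeonhole fact that for $n>|A|$ an operation is determined by its identification minors, together with the gluing of the interpreted family $\{\tb_{ij}^{\mathbf{A}}\}$ into one operation, with (C1) and (C2) giving well-definedness at tuples carrying several repeated pairs; this gluing is precisely \cite[Lemma 2.6]{DJPSz}, which the paper itself invokes in Lemma~\ref{lem:perm} and in the Claim inside Theorem~\ref{thm:prod}, so your argument meshes well with the rest of the text. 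For $(1)\Lra(3)$ you give the standard finite $\mathrm{Pol}$--$\mathrm{Inv}$ (Geiger-type) argument, the direction $(3)\Ra(1)$ via the finitely many invariant relations of arity at most $|A|^{n_0}$ and the ``free algebra'' relation $\theta_n\subseteq A^{A^n}$; what this buys over bare citation is a self-contained account making visible where the bound $n_0\ge|A|$ and finiteness of $\mathbf{A}$ are used. Three details should be made explicit in a final write-up, though none is a genuine gap: in $(1)\Ra(3)$ the quantity $|\rho|$ must be read as the \emph{cardinality} of $\rho$ (its number of tuples), not its arity, since the pigeonhole is applied to the $n$ columns viewed as elements of $\rho$; in $(3)\Ra(1)$, before invoking the inductive hypothesis on a minor $f_{ij}$ one must pass to the $(n-1)$-ary operation obtained by deleting the dummy coordinate and verify it still preserves every $\rho\in\mathcal{R}$ (e.g.\ by duplicating one of the given columns in position $i$); and in $(2)\Ra(3)$ condition (D) is required in \emph{every} member of $\V$, which follows from its validity in $\mathbf{A}$ because ``$\tb_{ij}$ does not depend on $x_i$'' is itself expressible as an identity with a fresh variable substituted for $x_i$, hence transfers from $\mathbf{A}$ to the variety it generates.
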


Here we immediately invoke Remark 2.12 from \cite{DJPSz} which provides an argument that in checking if a
finite algebra $\mathbf{A}$ satisfies condition (3) above, it can be assumed that the operation $f:A^n\to A$
is \emph{essential}, i.e.\ that it depends on all of its variables. Therefore, we will effectively use the previous
theorem in the form where the condition (3) is replaced by
\begin{itemize}
\item[(3')] There exists $n_0\geq|A|$ such that for all $n>n_0$, an operation $f:A^n\to A$ depending on all of
its variables is a term operation of $\mathbf{A}$ whenever all minors $f_{ij}$ ($i,j\in\nn$, $i<j$) are term 
operations of $\mathbf{A}$.
\end{itemize} 

Notice that for semigroups and semigroup varieties all definitions and results mentioned above remain valid when 
we replace terms by words (i.e.\ elements of the free semigroup $X_n^+$), term operations by operations induced
by words, etc., even though, strictly speaking, words are not terms. However, it is true that every term operation 
on a semigroup is induced by a word and vice versa. Therefore, it is meaningful to define the notion of an
$n$-scheme of words for a semigroup variety, and the `semigroup version' of Theorem \ref{thm:finrel} holds
as well: a finite semigroup $S$ is finitely related if and only if every $n$-scheme of words for the variety 
generated by $S$ comes from a single word (provided $n$ is large enough) if and only if the condition (3') holds
with respect to operations induced by words. It was proved in \cite{Ma} that this is the case when $S$ is a 
regular band, that is, a finite idempotent semigroup satisfying $xyxzx\id xyzx$. The aim of this paper is to 
extend this to \emph{all} finite bands in an inductive manner, taking the result of \cite{Ma} as a basis of 
the induction. Hence, in the next subsection we need to gather some basic information about varieties of bands 
and few auxiliary results describing their equational theories.

\subsection{Varieties of bands and their equational theories}
\label{subsec:bandv}

The variety of all bands will be denoted by $\BB$. It is known that any semigroup variety containing $\BB$ 
(including $\BB$ itself) is not generated by a finite semigroup \cite{OSa}, while every proper subvariety of 
$\BB$ is finitely generated (see \cite{Bi,Fe,Ge}). 

In what follows, we adopt the notation of \cite{GP} for certain operators acting on words; we briefly recall it
for completeness. For a word $\ww$ over a (finite) alphabet $X$, let $c(\ww)$ denote the \emph{content} of $\ww$, 
the set of all letters occurring in $\ww$. Further, let $s(\ww)$ denote the longest prefix of $\ww$ containing all 
but one of the letters from $c(\ww)$ (so that $|c(s(\ww))|=|c(\ww)|-1$ and $s(\ww)$ is maximal with this property), 
while $\sigma(\ww)$ is the last letter to occur in $\ww$ from the left (implying that $s(\ww)\sigma(\ww)$ is the 
shortest prefix of $\ww$ with the same content as $\ww$). Dually, let $e(\ww)$ be the longest suffix of $\ww$
containing all but one of the letters from $c(\ww)$, and let $\eps(\ww)$ be the last letter in $\ww$ to occur 
from the right.  

Define an operator $b$ on words by induction on the size of their content such that $b(\es)=\es$ (here $\es$ 
stands for the empty word) and 
$$
b(\ww) = bs(\ww)\sigma(\ww)\eps(\ww)be(\ww)
$$
(note the concatenation of word operators, where $bs(\ww)$ means $b(s(\ww))$, etc.).

\begin{thm}[cf.\ Lemma 2.7 and Theorem 2.9 of \cite{GP1}] \label{thm:bwp}
Let $X$ be an alphabet and $\uu,\vv\in X^+$. Then $\uu\id\vv$ holds in $\BB$ if and only if $b(\uu)=b(\vv)$.
In particular, for any word $\ww$ we have that the identity
$$
\ww \id s(\ww)\sigma(\ww)\eps(\ww)e(\ww)
$$
holds in any band.
\end{thm}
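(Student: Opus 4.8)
This is the classical Green--Rees solution of the word problem for free bands, so the plan is to prove the two implications of the equivalence separately and then read off the displayed identity as a corollary.

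For the \emph{if} direction it suffices to prove that $\ww\id b(\ww)$ holds in $\BB$ for every word $\ww$, by induction on $|c(\ww)|$; granting this, $b(\uu)=b(\vv)$ gives $\uu\id b(\uu)=b(\vv)\id\vv$. The case $\ww=\es$ is trivial, and the inductive step rests on the absorption identity $\ww\id s(\ww)\sigma(\ww)\eps(\ww)e(\ww)$, which I would establish directly from $x^2\id x$ via the sandwich law $\uu\vv\uu\id\uu$ (valid whenever $c(\vv)\subseteq c(\uu)$, itself proved by induction on content) together with its one-sided companions. Using the induction hypotheses $s(\ww)\id b(s(\ww))$ and $e(\ww)\id b(e(\ww))$, which are legitimate since $|c(s(\ww))|=|c(e(\ww))|=|c(\ww)|-1$, we then obtain $\ww\id s(\ww)\sigma(\ww)\eps(\ww)e(\ww)\id b(s(\ww))\sigma(\ww)\eps(\ww)b(e(\ww))=b(\ww)$.

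For the \emph{only if} direction I would show that $b$ factors through the free band. Let $B=\{b(\ww):\ww\in X^+\}$ be the set of canonical words and define a multiplication on $B$ by $b(\uu)\ast b(\vv)=b(\uu\vv)$. If this is well defined it is automatically associative (both bracketings equal $b(\uu\vv\ww)$) and idempotent, so $(B,\ast)$ is a band; the assignment $x\mapsto b(x)$ then extends to a homomorphism $\phi:X^+/{\id}\to(B,\ast)$, an easy induction using well-definedness gives $\phi([\ww])=b(\ww)$, and hence $\uu\id\vv$ forces $b(\uu)=\phi([\uu])=\phi([\vv])=b(\vv)$. Idempotence $b(\ww^2)=b(\ww)$ is immediate, since the first copy of $\ww$ already exhausts the content and so $c,s,\sigma,\eps,e$ of $\ww\ww$ coincide with those of $\ww$. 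The genuine work, and the \emph{main obstacle}, is well-definedness, which reduces to the compatibility identities $b(\uu\vv)=b(b(\uu)\,\vv)=b(\uu\,b(\vv))$: normalising one factor must not alter the normal form of the product. Proving this is the combinatorial heart of the Green--Rees theorem and requires a delicate induction on content tracking how $s,\sigma,\eps,e$ of a concatenation are computed from those of its factors; this is the content carried by Lemma~2.7 and Theorem~2.9 of \cite{GP1}.

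Finally, the displayed identity follows at once from the equivalence. Setting $\ww'=s(\ww)\sigma(\ww)\eps(\ww)e(\ww)$, the two content-completing letters of $\ww'$ occupy the same positions as in $\ww$, so $s(\ww')=s(\ww)$, $\sigma(\ww')=\sigma(\ww)$, $\eps(\ww')=\eps(\ww)$ and $e(\ww')=e(\ww)$; hence $b(\ww')=b(s(\ww))\sigma(\ww)\eps(\ww)b(e(\ww))=b(\ww)$, and the equivalence yields $\ww\id\ww'$. This identity is of course exactly the absorption lemma used in the first paragraph, so no circularity is incurred.
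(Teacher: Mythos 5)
The first thing to note is that the paper contains no proof of this statement: Theorem \ref{thm:bwp} is imported wholesale from the literature (Lemma 2.7 and Theorem 2.9 of \cite{GP1}), so your attempt is being measured not against an internal argument but against the classical Green--Rees/Gerhard--Petrich development itself. Judged on that basis, your architecture is the standard one and every reduction you actually carry out is valid: soundness does reduce to $\ww\id b(\ww)$ by induction on $|c(\ww)|$ via the absorption identity; completeness does follow once $B=\{b(\ww):\ww\in X^+\}$ is a band under $b(\uu)\ast b(\vv)=b(\uu\vv)$, since $X^+/{\id}$ is the free band, associativity and idempotency of $\ast$ come out as you say, and well-definedness does reduce to the compatibility identities $b(\uu\vv)=b(b(\uu)\,\vv)=b(\uu\,b(\vv))$; your closing derivation of $\ww\id s(\ww)\sigma(\ww)\eps(\ww)e(\ww)$ from the equivalence is also correct.

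The gaps are the two combinatorial cores, and neither is filled. In the soundness half everything rests on the sandwich law $\uu\vv\uu\id\uu$ for $c(\vv)\subseteq c(\uu)$, which you dispose of parenthetically as ``itself proved by induction on content''. That is precisely the hard step: a naive induction on content stalls, because the inductive step at content size $n$ keeps demanding deletion laws at the same content size $n$ (one circles among the sandwich law, the three-factor law $\uu\vv\ww\id\uu\ww$ for $c(\vv)\subseteq c(\uu)=c(\ww)$, and absorption, all at level $n$), so a correct proof must do genuine work here rather than assert it. Relatedly, the passage from the sandwich law to absorption is not spelled out; it needs that three-factor law, which does follow from the sandwich law in two substitutions, namely $\uu\vv\ww\id\uu\vv(\ww\uu\ww)=(\uu\vv\ww\uu)\ww\id\uu\ww$, together with a case analysis on whether the prefix $s(\ww)\sigma(\ww)$ and the suffix $\eps(\ww)e(\ww)$ overlap in $\ww$ (the overlapping case is pure idempotency). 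In the completeness half you explicitly concede the compatibility identities to \cite{GP1}. So what you have written is a correct structural reduction of the theorem to the cited source --- which, in fairness, is exactly what the paper itself does --- but the two deferred steps \emph{are} the theorem, and they must be flagged as unproved lemmas rather than as finished steps.
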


This immediately implies the following well-known fact (recorded, e.g., as Corollary 2.3 in \cite{GP}).

\begin{cor}\label{cor:bwp}
Let $\uu,\vv,\ww$ be words such that $c(\vv)\subseteq c(\uu)=c(\ww)$. Then the identity $\uu\vv\ww\id\uu\ww$
holds in $\BB$.
\end{cor}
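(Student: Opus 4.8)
The plan is to reduce the claim to an equality of canonical forms under the operator $b$, using the solution of the band word problem in Theorem \ref{thm:bwp}: since $\uu\vv\ww\id\uu\ww$ holds in $\BB$ if and only if $b(\uu\vv\ww)=b(\uu\ww)$, it suffices to verify the latter as a literal equality of words. The crucial structural observation is that, because $c(\vv)\subseteq c(\uu)=c(\ww)$, all four words $\uu$, $\ww$, $\uu\vv\ww$ and $\uu\ww$ share one and the same content, say $C=c(\uu)$; in particular the content does not grow across the $\vv$-block.

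First I would analyse the left-hand operators. Reading $\uu\vv\ww$ from the left, the full content $C$ is already attained within the initial factor $\uu$ (as $c(\uu)=C$), so the shortest prefix of $\uu\vv\ww$ with content $C$ lies entirely inside $\uu$ and coincides with the shortest such prefix of $\uu$. Consequently $s(\uu\vv\ww)=s(\uu)$ and $\sigma(\uu\vv\ww)=\sigma(\uu)$, and the identical computation gives $s(\uu\ww)=s(\uu)$ and $\sigma(\uu\ww)=\sigma(\uu)$. Dually, reading from the right, the full content is attained within the final factor $\ww$, which yields $e(\uu\vv\ww)=e(\ww)=e(\uu\ww)$ and $\eps(\uu\vv\ww)=\eps(\ww)=\eps(\uu\ww)$.

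With these four equalities in hand, a single application of the defining recursion $b(\zz)=bs(\zz)\sigma(\zz)\eps(\zz)be(\zz)$ to both $\zz=\uu\vv\ww$ and $\zz=\uu\ww$ produces the same word on the nose: the prefix operators $s,\sigma$ feed in identical arguments (depending only on $\uu$) and the suffix operators $e,\eps$ feed in identical arguments (depending only on $\ww$). Note that no further unfolding of the recursion is required, since the relevant subwords are literally equal rather than merely $\BB$-equivalent. This gives $b(\uu\vv\ww)=b(\uu\ww)$ and hence the desired identity.

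I expect the only point demanding care to be the verification that the completing letter and the maximal deficient prefix really fall inside $\uu$ (respectively inside $\ww$) rather than spilling into the $\vv$-block; this rests entirely on the content equalities and covers uniformly the degenerate case $|C|=1$, where $s$ and $e$ return the empty word, $\sigma$ and $\eps$ return the single letter of $C$, and both sides collapse to $b(x^k)=xx$. No genuine obstacle arises beyond this bookkeeping.
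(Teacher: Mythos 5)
Your proof is correct and follows essentially the same route as the paper: both establish the four equalities $s(\uu\vv\ww)=s(\uu)=s(\uu\ww)$, $\sigma(\uu\vv\ww)=\sigma(\uu)=\sigma(\uu\ww)$, $e(\uu\vv\ww)=e(\ww)=e(\uu\ww)$, $\eps(\uu\vv\ww)=\eps(\ww)=\eps(\uu\ww)$ from the content hypothesis and then conclude via Theorem \ref{thm:bwp}. Your explicit single unfolding of the recursion for $b$ (and the check of the $|C|=1$ case) is just a slightly more detailed write-up of the step the paper leaves implicit.
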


\begin{proof}
Under the given conditions, $s(\uu\vv\ww)=s(\uu)=s(\uu\ww)$ and $\sigma(\uu\vv\ww)=\sigma(\uu)=\sigma(\uu\ww)$,
and, dually, $e(\uu\vv\ww)=e(\ww)=e(\uu\ww)$ and $\eps(\uu\vv\ww)=\eps(\ww)=\eps(\uu\ww)$, so the result
follows by Theorem \ref{thm:bwp}.
\end{proof}

The description of the lattice $\L(\BB)$ of all subvarieties of $\BB$ is today still considered as one of the
most glaring success stories of the theory of semigroup varieties; it was achieved independently by Biryukov
\cite{Bi}, Fennemore \cite{Fe}, and Gerhard \cite{Ge}, whereas a more `economical' and coherent treatment of
the subject was given later in \cite{GP}. Figure \ref{fig:bandv} provides a diagram of this lattice: here 
$\mathcal{SL}$, $\mathcal{LZ}$ and $\mathcal{RZ}$ are respectively the varieties of semilattices, left zero 
and right zero bands.

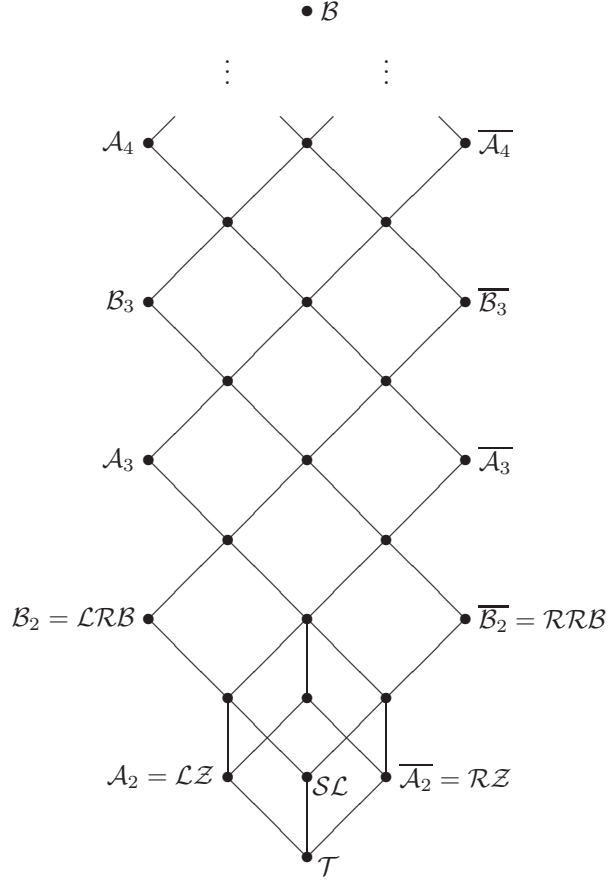
\begin{figure}[t]\centering
\begin{picture}(140.00,335.00)
\put(70.00,325.00){\circle*{4}}
\put(70.00,5.00){\circle*{4}} \put(40.00,35.00){\circle*{4}} \put(70.00,35.00){\circle*{4}}
\put(100.00,35.00){\circle*{4}} \put(40.00,65.00){\circle*{4}} \put(70.00,65.00){\circle*{4}}
\put(100.00,65.00){\circle*{4}} \put(10.00,95.00){\circle*{4}} \put(70.00,95.00){\circle*{4}}
\put(130.00,95.00){\circle*{4}} \put(40.00,125.00){\circle*{4}} \put(100.00,125.00){\circle*{4}}
\put(10.00,155.00){\circle*{4}} \put(70.00,155.00){\circle*{4}} \put(130.00,155.00){\circle*{4}}
\put(40.00,185.00){\circle*{4}} \put(100.00,185.00){\circle*{4}} \put(10.00,215.00){\circle*{4}}
\put(70.00,215.00){\circle*{4}} \put(130.00,215.00){\circle*{4}} \put(40.00,245.00){\circle*{4}}
\put(100.00,245.00){\circle*{4}} \put(10.00,275.00){\circle*{4}} \put(70.00,275.00){\circle*{4}}
\put(130.00,275.00){\circle*{4}} \put(70.00,5.00){\line(1,1){30.00}} \put(70.00,5.00){\line(-1,1){30.00}}
\put(40.00,35.00){\line(1,1){30.00}} \put(100.00,35.00){\line(-1,1){30.00}} \put(40.00,65.00){\line(1,1){30.00}}
\put(100.00,65.00){\line(-1,1){30.00}} \put(40.00,65.00){\line(0,-1){30.00}} \put(100.00,65.00){\line(0,-1){30.00}}
\put(70.00,35.00){\line(0,-1){30.00}} \put(70.00,95.00){\line(0,-1){30.00}} \put(70.00,35.00){\line(1,1){60.00}}
\put(70.00,35.00){\line(-1,1){60.00}} \put(10.00,95.00){\line(1,1){60.00}} \put(130.00,95.00){\line(-1,1){60.00}}
\put(70.00,95.00){\line(1,1){60.00}} \put(70.00,95.00){\line(-1,1){60.00}} \put(10.00,155.00){\line(1,1){60.00}}
\put(130.00,155.00){\line(-1,1){60.00}} \put(70.00,155.00){\line(1,1){60.00}} \put(70.00,155.00){\line(-1,1){60.00}}
\put(10.00,215.00){\line(1,1){70.00}} \put(130.00,215.00){\line(-1,1){70.00}} \put(70.00,215.00){\line(1,1){60.00}}
\put(70.00,215.00){\line(-1,1){60.00}} \put(10.00,275.00){\line(1,1){10.00}} \put(130.00,275.00){\line(-1,1){10.00}}
\put(40.00,305.00){\makebox(0,0)[cc]{$\vdots$}} 
\put(100.00,305.00){\makebox(0,0)[cc]{$\vdots$}}
\put(74.00,5.00){\makebox(0,0)[lt]{$\mathcal{T}$}} 
\put(72.00,35.00){\makebox(0,0)[lt]{$\mathcal{SL}$}}
\put(35.00,35.00){\makebox(0,0)[rc]{$\AA_2=\mathcal{LZ}$}} 
\put(105.00,35.00){\makebox(0,0)[lc]{$\ol{\AA_2}=\mathcal{RZ}$}} 
\put(5.00,95.00){\makebox(0,0)[rc]{$\BB_2=\mathcal{LRB}$}}
\put(135.00,95.00){\makebox(0,0)[lc]{$\ol{\BB_2}=\mathcal{RRB}$}} 
\put(5.00,155.00){\makebox(0,0)[rc]{$\AA_3$}}
\put(135.00,155.00){\makebox(0,0)[lc]{$\ol{\AA_3}$}} 
\put(5.00,215.00){\makebox(0,0)[rc]{$\BB_3$}}
\put(135.00,215.00){\makebox(0,0)[lc]{$\ol{\BB_3}$}} 
\put(5.00,275.00){\makebox(0,0)[rc]{$\AA_4$}}
\put(135.00,275.00){\makebox(0,0)[lc]{$\ol{\AA_4}$}}
\put(75.00,325.00){\makebox(0,0)[lc]{$\BB$}}
\end{picture}
\caption{The lattice of all varieties of bands}\label{fig:bandv}
\end{figure}

Of particular importance are the two sequences of join- and meet-irreducible varieties $\AA_m$ and $\BB_m$, 
$m\geq 2$, as well as their duals $\ol{\AA_m}$ and $\ol{\BB_m}$ -- any proper (i.e.\ finitely generated) band 
variety not contained in $\mathcal{SL}\vee\mathcal{LZ}\vee\mathcal{RZ}$ (the `bottom cube') is a join of a pair 
of these. In fact, we shall first prove that any finite band generating one of the varieties $\AA_m$, $\BB_m$
is finitely related; by left-right duality, this will extend to $\ol{\AA_m}$ and $\ol{\BB_m}$, and then we shall 
show how to use this fact to prove finite relatedness for any finite band generating a proper subvariety of $\BB$.
The varieties $\mathcal{LRB}$ and $\mathcal{RRB}$ are the varieties of \emph{left regular} and \emph{right regular 
bands} defined by identities $xyx\id xy$ and $xyx\id yx$, respectively. Their join is the variety of \emph{regular
bands} that is the subject of Theorem 6.2 of \cite{Ma}. As we remarked earlier, our approach will be inductive 
with respect to the chains formed by varieties $\AA_m$, $\BB_m$ and their duals, with the latter result from 
\cite{Ma} serving as a basis of that induction.

For technical convenience in our main argument it will be useful to select and fix finite bands $A_m,B_m$, 
$m\geq 2$ (as well as their dual semigroups $\ol{A_m}$ and $\ol{B_m}$), such that $A_m$ generates the variety
$\AA_m$ and $B_m$ generates $\BB_m$ (For example, one can choose $A_2$ to be the two-element left zero band,
$B_2$ to be $A_2$ with an identity element adjoined, etc. Note that we can choose these bands so that 
$|A_2|<|B_2|<|A_3|< |B_3|<\dots$ holds.)  

Our main task at this moment is to describe the equational theories of the considered band varieties, following
the approach laid out in \cite{GP}. To this end, we introduce word functions $h_m$ and $i_m$ for $m\geq 2$ and
their duals, $\ol{h}_m,\ol{i}_m$, in the sense that if $\ol{\ww}$ denotes the reverse of the word $\ww$ we have 
$\ol{t}_m(\ww)=\ol{t_m(\ol{\ww})}$ for $t\in\{h,i\}$:
\begin{itemize}
\item $t_m(\es)=\es$ for all $m\geq 2$ and $t\in\{h,i\}$;
\item $h_2(\ww)$ is the first letter of $\ww$ from the left (the \emph{head} of $\ww$);
\item $i_2(\ww)$ is the word obtained from $\ww$ by retaining only the first occurrence from the left of each 
letter (the \emph{initial part} of $\ww$, also defined recursively by $i_2(\ww)=i_2s(\ww)\sigma(\ww)$);
\item for $m\geq 3$ and $t\in\{h,i\}$ we set
$$
t_m(\ww) = t_ms(\ww)\sigma(\ww)\ol{t}_{m-1}(\ww).
$$
\end{itemize}

The key feature of these functions is expressed in the following statement.

\begin{thm}[\cite{GP}]\label{xm-wp}
Let $m\geq 2$ and let $\uu,\vv$ be two words.
\begin{itemize}
\item $\AA_m$ satisfies $\uu\id\vv$ if and only if $h_m(\uu)=h_m(\vv)$. 
\item $\BB_m$ satisfies $\uu\id\vv$ if and only if $i_m(\uu)=i_m(\vv)$.
\end{itemize}
Analogous equivalences hold for the dual varieties $\ol{\AA_m}$ and $\ol{\BB_m}$ and functions $\ol{h}_m$ and
$\ol{i}_m$, respectively.
\end{thm}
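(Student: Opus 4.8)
The plan is to show, by induction on $m$, that for the function $t_m$ attached to each variety the kernel relation $\uu\sim\vv\Leftrightarrow t_m(\uu)=t_m(\vv)$ coincides with the fully invariant congruence $\equiv_{\V}$ on the free semigroup determined by $\V$ (so that $\uu\equiv_{\V}\vv$ means $\uu\id\vv$ holds in $\V$). I would treat $\V=\AA_m$ with $t_m=h_m$ and $\V=\BB_m$ with $t_m=i_m$ in parallel, and obtain the dual cases for free from the built-in symmetry $\ol{t}_m(\ww)=\ol{t_m(\ol{\ww})}$ together with the left--right duality of the lattice in Fig.\ \ref{fig:bandv}. As usual for a word problem, the statement $\ker t_m=\equiv_{\V}$ splits into a \emph{completeness} part, namely that $t_m$ is a sound normal form ($\ww\id t_m(\ww)$ holds in $\V$, whence $t_m(\uu)=t_m(\vv)$ forces $\uu\id\vv$), and a \emph{soundness} part, namely that $\id$-equivalent words are mapped by $t_m$ to the very same word. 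It is also convenient to establish first that $t_m$ factors through the free-band operator $b$ (legitimate since $\AA_m,\BB_m\subseteq\BB$), so that one may assume the words already to be in band normal form; the content-preservation and idempotency of $t_m$ are recorded at the same time.

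For the base case $m=2$ both directions are classical and immediate: $\AA_2=\mathcal{LZ}$ satisfies $xy\id x$, so every nonempty word collapses to its head $h_2(\ww)$, and $\BB_2=\mathcal{LRB}$ satisfies $xyx\id xy$, so every word collapses to its initial part $i_2(\ww)$. For the inductive step I would exploit the defining recursion $t_m(\ww)=t_m s(\ww)\,\sigma(\ww)\,\ol{t}_{m-1}(\ww)$ through two nested inductions: an outer one on $m$, feeding in the dual statement at level $m-1$ as the inductive hypothesis (so that $\ol{t}_{m-1}$ is already known to solve the word problem of $\ol{\V}_{m-1}$), and an inner one on the content $|c(\ww)|$, which controls the subword $s(\ww)$ since $|c(s(\ww))|=|c(\ww)|-1$. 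The completeness direction $\ww\id t_m(\ww)$ would then be obtained by starting from the universal band identity $\ww\id s(\ww)\sigma(\ww)\eps(\ww)e(\ww)$ of Theorem \ref{thm:bwp} and rewriting its right-hand tail $\eps(\ww)e(\ww)$ into $\ol{t}_{m-1}(\ww)$ by means of the defining identities of $\V$ and the inductive hypothesis for the dual variety, while the head $s(\ww)\sigma(\ww)$ is absorbed into the content induction.

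For the soundness direction the real work is to prove that $\ker t_m$ is a fully invariant congruence containing the equational basis of $\V$; once this is in place, the fully invariant congruence generated by that basis, which is exactly $\equiv_{\V}$, is contained in $\ker t_m$, and together with completeness this yields $\ker t_m=\equiv_{\V}$. Full invariance under letter substitutions, and invariance of $t_m$ under a single application of a defining identity of $\V$, are comparatively routine once content-preservation and idempotency are known, again by unwinding the recursion and invoking the two inductions. The main obstacle, and the crux of the whole argument, will be compatibility with concatenation: one must show that $t_m(\uu\ww)$ is determined by $t_m(\uu)$, $t_m(\ww)$ and the contents $c(\uu),c(\ww)$ in a manner stable under replacing $\uu,\ww$ by $t_m$-equivalent words. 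Here the asymmetry of the recursion bites, since the left factor is processed through the content-reducing pair $s,\sigma$ while the right factor is processed through the dual operator $\ol{t}_{m-1}$; reconciling the two demands a case analysis according to whether $c(\ww)\subseteq c(\uu)$ or new letters appear on the right, and it is precisely here that Corollary \ref{cor:bwp} on the absorption of content-dominated factors, combined with the dual inductive hypothesis, is expected to do the heavy lifting.
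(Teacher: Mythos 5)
The first thing to say is that there is no proof in the paper to compare against: Theorem \ref{xm-wp} is imported verbatim from Gerhard and Petrich \cite{GP}, with the citation serving as the proof, and the present paper only distills from \cite{GP} the auxiliary facts it needs (Lemma \ref{stm}). So your proposal can only be measured against the original development in \cite{GP}, and in broad shape it does resemble it: normal-form functions for the relatively free objects, a joint induction on $m$ interleaved with left--right duality, and the splitting into ``$\ww\id t_m(\ww)$ holds in $\V$'' plus ``$\ker t_m$ is a fully invariant congruence containing a basis of $\V$.'' Your base case $m=2$ and the duality reduction are correct.

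As a proof, however, the proposal has genuine gaps, and they sit exactly where the content of the theorem lies. First, you never say what $\AA_m$ and $\BB_m$ \emph{are}: the paper introduces them only as positions in the lattice of Fig.~\ref{fig:bandv}, so any proof must start from a concrete handle --- an equational basis (Fennemore's recursively defined identities, as used in \cite{GP}) or a generating band --- and both of your key steps (``derive $\ww\id t_m(\ww)$ from the defining identities of $\V$,'' ``check that the basis of $\V$ lies in $\ker t_m$'') cannot even be begun until that basis is fixed. Second, your completeness step hides an unproved bridging principle: from $\ww\id s(\ww)\sigma(\ww)\ww$ and the inner induction one gets $\ww\id t_ms(\ww)\sigma(\ww)\ww$ in $\XX_m$, but to replace the trailing factor $\ww$ by $\ol{t}_{m-1}(\ww)$ you need that left multiplication by a word of full content converts $\ol{\XX_{m-1}}$-equivalence into $\XX_m$-equivalence. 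Note that the inclusion $\ol{\XX_{m-1}}\subseteq\XX_m$ goes the wrong way for your stated plan: an identity valid in the \emph{smaller} variety $\ol{\XX_{m-1}}$ (which is what the dual inductive hypothesis supplies) does not automatically hold in $\XX_m$, so ``rewriting the tail by the inductive hypothesis for the dual variety'' is not a legitimate move without this bridging lemma. Third, for soundness you yourself identify compatibility of $\ker t_m$ with concatenation as the crux and then defer it (``is expected to do the heavy lifting''); announcing the hard step is not proving it, and in \cite{GP} precisely these two points absorb most of the technical work. Finally, your preliminary reduction ``assume the words are in band normal form'' presupposes that $b(\uu)=b(\vv)$ implies $t_m(\uu)=t_m(\vv)$, which is itself an instance of the soundness direction (applied to the identities of $\BB\supseteq\XX_m$), so it cannot be taken as a cost-free first step.
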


The next few properties will be used in our proofs.

\begin{lem}\label{stm}
\begin{itemize}
\item[(1)] Let $t\in\{h,i\}$. If $m\geq 3$ or $t_m=i_2$ then $st_m(\ww)=t_ms(\ww)$ and $e\ol{t}_m(\ww)=\ol{t}_me(\ww)$ 
for any word $\ww$.
\item[(2)] Let $t\in\{h,i\}$. If $m\geq 4$ or $t_m=i_3$ then
$$
bt_m(\ww) = b(t_ms(\ww)\sigma(\ww)\eps(\ww)\ol{t}_{m-1}e(\ww))
$$
and 
$$
b\ol{t}_m(\ww) = b(t_{m-1}s(\ww)\sigma(\ww)\eps(\ww)\ol{t}_me(\ww))
$$
for any word $\ww$.
\item[(3)] Let $t\in\{h,i,\ol{h},\ol{i}\}$ and let $\uu,\vv$ be any words. If $m\geq 2$ then $bt_m(\uu)=bt_m(\vv)$
implies $t_m(\uu)=t_m(\vv)$.
\item[(4)] Let $t\in\{h,i\}$. If $m\geq 3$ then
$$
\ol{t}_m(\ww) = t_{m-1}(\ww)\eps(\ww)\ol{t}_me(\ww)
$$
for any word $\ww$.
\end{itemize}
\end{lem}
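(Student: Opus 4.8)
The plan is to treat the four parts in the order (4), (1), (2), (3), each feeding the next, with (3) as the genuine difficulty. Everywhere I exploit the reversal symmetry $\ol{t}_m(\ww)=\ol{t_m(\ol{\ww})}$, the identities $s(\ol{\ww})=\ol{e(\ww)}$ and $\sigma(\ol{\ww})=\eps(\ww)$, and the self-duality $\ol{b(\ww)}=b(\ol{\ww})$ (immediate from the recursive definition of $b$); this lets me obtain each ``$e$/$\ol{t}_m$'' half from the corresponding ``$s$/$t_m$'' half by reversing all words, so I only sketch the left-handed versions. Part (4) is then a direct unwinding: expand $t_m(\ol{\ww})=t_m s(\ol{\ww})\sigma(\ol{\ww})\ol{t}_{m-1}(\ol{\ww})$ (valid for $m\ge3$), substitute the reversal identities, and reverse the concatenation; the three blocks become $t_{m-1}(\ww)$, the letter $\eps(\ww)$, and $\ol{t}_m e(\ww)$. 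Specialising the same computation to $m=2$, $t=i$ (where the leading block degenerates to $\es$) yields the auxiliary identity $\ol{i}_2(\ww)=\eps(\ww)\ol{i}_2 e(\ww)$ needed later. For (1), I observe that $\sigma(t_m(\ww))=\sigma(\ww)$ and that the first occurrence of this letter in $t_m(\ww)=t_m s(\ww)\sigma(\ww)\ol{t}_{m-1}(\ww)$ lies exactly after the block $t_m s(\ww)$, whose content omits $\sigma(\ww)$; since $s$ truncates a word just before the first occurrence of its $\sigma$, this gives $s\,t_m(\ww)=t_m s(\ww)$ at once for $m\ge3$. The case $t_m=i_2$ is identical via $i_2(\ww)=i_2 s(\ww)\sigma(\ww)$, and $h_2$ is precisely the case that must be excluded.

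Part (2) I would derive from (4). For $m\ge4$, applying (4) to $\ol{t}_{m-1}$ rewrites the recursion as $t_m(\ww)=(t_m s(\ww)\sigma(\ww))\,t_{m-2}(\ww)\,(\eps(\ww)\ol{t}_{m-1}e(\ww))$, where each of the three displayed factors has content $c(\ww)$; Corollary \ref{cor:bwp} then deletes the interior factor $t_{m-2}(\ww)$ modulo the identities of $\BB$, and Theorem \ref{thm:bwp} converts this $\BB$-equivalence into the asserted equality of $b$-images. The leftover case $m=3$, $t=i$ uses the auxiliary identity $\ol{i}_2(\ww)=\eps(\ww)\ol{i}_2 e(\ww)$, which makes the two words literally equal already before $b$ is applied.

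Part (3) is the crux. Comparing $b\,t_m(\uu)$ and $b\,t_m(\vv)$ directly seems hopeless, since by (2) the map $b\circ t_m$ forgets the interior factor $t_{m-2}$; instead I would exhibit a left inverse of $b$ on the image of $t_m$ by proving the single identity $t_m\circ b\circ t_m=t_m$. Granting it, $b\,t_m(\uu)=b\,t_m(\vv)$ immediately gives $t_m(\uu)=t_m\,b\,t_m(\uu)=t_m\,b\,t_m(\vv)=t_m(\vv)$, and the four choices $t\in\{h,i,\ol{h},\ol{i}\}$ reduce to $t\in\{h,i\}$ by reversal. To prove $t_m\,b\,t_m=t_m$ I use that $b(\zz)\id\zz$ holds in $\BB$ for every word $\zz$ -- an easy induction on $|c(\zz)|$ from the ``in particular'' clause of Theorem \ref{thm:bwp} -- so that $b\,t_m(\ww)\id t_m(\ww)$ in $\AA_m\subseteq\BB$ and hence $t_m(b\,t_m(\ww))=t_m(t_m(\ww))$ by Theorem \ref{xm-wp}, together with the idempotency $t_m\circ t_m=t_m$.

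The main obstacle is therefore the idempotency $t_m\circ t_m=t_m$, equivalently (by Theorem \ref{xm-wp}) the normal-form property $t_m(\ww)\id\ww$ in $\AA_m$. I expect to establish this by induction on $m$ and on $|c(\ww)|$ from the recursion $t_m(\ww)=t_m s(\ww)\sigma(\ww)\ol{t}_{m-1}(\ww)$: the inductive hypotheses give $t_m s(\ww)\id s(\ww)$ and $\ol{t}_{m-1}(\ww)\id\ww$ in $\AA_m$ (the latter requiring the containment $\ol{\AA_{m-1}}\subseteq\AA_m$, which itself drops out of reading off the suffix after the first $\sigma$ in the recursion), while $s(\ww)\sigma(\ww)\ww\id\ww$ holds already in $\BB$ because $s(\ww)\sigma(\ww)$ is a full-content prefix of $\ww$ and every word is idempotent in a band. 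Chaining these congruences closes the induction and settles (3).
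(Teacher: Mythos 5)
Your parts (4), (1) and (2) are correct, and they are genuine proofs of facts that the paper itself does not prove but merely cites from Gerhard and Petrich \cite{GP} (Lemmas 3.2, 3.3 and 4.1 there), so up to that point you do strictly more than the paper. (One small omission: your argument for (1) silently uses $c(t_m(\zz))=c(\zz)$ for $m\ge 3$ or $t_m=i_2$, which needs its own one-line induction.) Likewise, your reduction of part (3) to the identity $t_m\circ b\circ t_m=t_m$, and hence --- via $\BB\models b(\zz)\id\zz$ and Theorem \ref{xm-wp} --- to the idempotency $t_m\circ t_m=t_m$, is sound.

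The gap is in your proof of that idempotency, equivalently of the normal-form property that $\XX_m$ satisfies $t_m(\ww)\id\ww$. Your chain of congruences rests on the claim that $\ol{t}_{m-1}(\ww)\id\ww$ holds in $\AA_m$, justified by the containment $\ol{\AA_{m-1}}\subseteq\AA_m$. The containment is true, but it transfers identities in the opposite direction: a subvariety satisfies \emph{more} identities, so $\ol{\AA_{m-1}}\subseteq\AA_m$ lets you push identities of $\AA_m$ down into $\ol{\AA_{m-1}}$, whereas you need the identity $\ol{t}_{m-1}(\ww)\id\ww$ of $\ol{\AA_{m-1}}$ (your dual inductive hypothesis) to hold in the \emph{larger} variety $\AA_m$. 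It does not: for $m=3$, $t=h$ we have $\ol{h}_2(\ww)=\eps(\ww)$, and the identity $xy\id y$ fails already in $\mathcal{LZ}=\AA_2\subseteq\AA_3$; equivalently $h_3(xy)=xxyy\neq yy=h_3(y)$, so Theorem \ref{xm-wp} refutes it (the same happens for $t=i$, e.g.\ $\BB_3$ does not satisfy $xyx\id\ol{i}_2(xyx)=yx$). What your inductive step actually requires is the equality $\ol{t}_{m-1}(t_m(\ww))=\ol{t}_{m-1}(\ww)$: expanding by the recursion and your part (1), $t_m(t_m(\ww))=t_m(t_ms(\ww))\,\sigma(\ww)\,\ol{t}_{m-1}(t_m(\ww))$, where the first factor is handled by the content induction and the last factor is the obstruction. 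That equality says that $t_m(\ww)\id\ww$ holds in the smaller dual variety $\ol{\XX_{m-1}}$; this would follow from the very theorem you are proving, but cannot be assumed mid-induction, and unwinding it via part (4) only pushes the problem down to $t_{m-2}(t_m(\ww))=t_{m-2}(\ww)$, and so on along both chains. A correct proof must therefore carry the stronger simultaneous statement $t_r(t_m(\ww))=t_r(\ww)$ and $\ol{t}_r(t_m(\ww))=\ol{t}_r(\ww)$ for all $2\le r\le m$ through the induction, with the attendant prefix/suffix bookkeeping (e.g.\ $\eps(t_m(\ww))=\eps(\ww)$ and $e(t_m(\ww))=\ol{t}_{m-1}e(\ww)$); this is precisely the content of Lemmas 3.3--3.5 and 4.1 of \cite{GP}, which the paper cites rather than reproves. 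As it stands, part (3) --- which you yourself identify as the crux --- is not established.
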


\begin{proof}
The first part of (1) is just \cite[Lemma 3.2(ii)]{GP}, while the second part is dual to the first one and follows
from it, as $e\ol{t}_m(\ww)=e\ol{t_m(\ol{\ww})}=\ol{st_m(\ol{\ww})}=\ol{t_ms(\ol{\ww})}=\ol{t_m(\ol{e(\ww)})}=
\ol{t}_me(\ww)$. (2)--(4) are Lemma 3.3(iii),(v), Lemma 4.1 and Lemma 3.3(ii) of \cite{GP}, respectively, 
where (2) and (3) are reformulated in terms of the operator $b$, bearing in mind Theorem \ref{thm:bwp}.
\end{proof}

\begin{lem}\label{xm-s}
Let $\XX\in\{\AA,\BB\}$. Assume that the variety $\XX_m$ satisfies the identity $\uu\id\vv$, where 
$|c(\uu)|,|c(\vv)|\geq 2$. Then it also satisfies $s(\uu)\id s(\vv)$.
\end{lem}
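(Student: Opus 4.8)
The plan is to translate the semantic hypothesis into an equation between the content-reducing word functions of Theorem \ref{xm-wp} and then exploit the commutation relation supplied by Lemma \ref{stm}(1). Write $t=h$ if $\XX=\AA$ and $t=i$ if $\XX=\BB$, so that by Theorem \ref{xm-wp} the assumption ``$\XX_m$ satisfies $\uu\id\vv$'' is precisely the equality $t_m(\uu)=t_m(\vv)$, and what must be established is the equality $t_m(s(\uu))=t_m(s(\vv))$, again by Theorem \ref{xm-wp}.

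The generic case comprises all $m\geq 3$ together with $m=2$, $t=i$ (that is, $t_m=i_2$). In each of these, Lemma \ref{stm}(1) gives the operator identity $s\,t_m=t_m\,s$. I would then simply apply the operator $s$ to both sides of $t_m(\uu)=t_m(\vv)$ and commute:
$$
t_m(s(\uu)) = s\,t_m(\uu) = s\,t_m(\vv) = t_m(s(\vv)),
$$
which by Theorem \ref{xm-wp} says exactly that $\XX_m$ satisfies $s(\uu)\id s(\vv)$. No computation beyond the cited commutation is required.

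The one situation not covered is $m=2$ with $t=h$, that is, $\XX_2=\AA_2=\mathcal{LZ}$, and this is precisely where Lemma \ref{stm}(1) is deliberately silent, since $s\,h_2$ collapses to the empty word (the content of a single letter has size one) whereas $h_2\,s$ does not. I would dispatch this case directly from the definitions: $h_2(\ww)$ is the head of $\ww$, while $s(\ww)$ is by construction a prefix of $\ww$. Because $|c(\uu)|\geq 2$, the prefix $s(\uu)$ is nonempty and starts with the same letter as $\uu$, so $h_2(s(\uu))=h_2(\uu)$; by the symmetric use of $|c(\vv)|\geq 2$ we get $h_2(s(\vv))=h_2(\vv)$. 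The hypothesis $h_2(\uu)=h_2(\vv)$ then yields $h_2(s(\uu))=h_2(s(\vv))$, as needed.

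The content hypothesis $|c(\uu)|,|c(\vv)|\geq 2$ is used only in this left-zero case, to ensure that $s(\uu)$ and $s(\vv)$ are nonempty so that their heads exist; in the generic case it plays no role, since the operator identity $s\,t_m=t_m\,s$ holds for all words. The only genuine subtlety is recognizing that $\AA_2$ must be peeled off and handled by the prefix observation rather than by the commutation lemma; everything else reduces to a single application of Theorem \ref{xm-wp} together with Lemma \ref{stm}(1).
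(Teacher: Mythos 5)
Your proposal is correct and follows essentially the same route as the paper's own proof: the paper likewise peels off the case $\XX_m=\AA_2$ and settles it by the observation that $h_2s(\uu)=h_2(\uu)=h_2(\vv)=h_2s(\vv)$ (valid because $|c(\uu)|,|c(\vv)|\geq 2$), and in all remaining cases applies $s$ to the equality $t_m(\uu)=t_m(\vv)$ and invokes the commutation $st_m=t_ms$ from Lemma \ref{stm}(1) together with Theorem \ref{xm-wp}. No substantive difference; your closing remark on where the content hypothesis is genuinely used is a fair (and correct) gloss that the paper leaves implicit.
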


\begin{proof}
Consider first the case when $\XX_m=\AA_2$. Since both $\uu$ and $\vv$ contain at least two letters, we have
$h_2s(\uu)=h_2(\uu)=h_2(\vv)=h_2s(\vv)$, and the lemma follows. Otherwise, if either $\XX_m=\BB_2$ or $m\geq 3$,
let $t_m$ be the corresponding word function in the sense of Theorem \ref{xm-wp}. By the given conditions we
have $t_m(\uu)=t_m(\vv)$, which in turn implies $st_m(\uu)=st_m(\vv)$.
Then Lemma \ref{stm}(1) tells us that $t_ms(\uu)=t_ms(\vv)$ (since we assumed that $t_m\neq h_2$). 
Another application of Theorem \ref{xm-wp} yields that $\XX_m$ satisfies $s(\uu)\id s(\vv)$. 
\end{proof}

\subsection{Some auxiliary results}
\label{subsec:aux}

As Theorem \ref{thm:finrel} suggests, we will be concerned with essential operations $f:S^n\to S$ on a finite 
(idempotent)  semigroup $S$ such that all of its minors $f_{ij}$ are induced by words. In the next lemma we are
looking at some consequences of such a setting.

\begin{lem}\label{lem:ess}
Let $S$ be a finite semigroup, and let $f:S^n\to S$ be an operation depending on all of its variables such that
for any $i,j\in\nn$, $i<j$, there is a word $\ww_{ij}$ satisfying $f_{ij}=\ww_{ij}^S$.
\begin{itemize}
\item[(1)] $\{\ww_{ij}:\ 1\leq i<j\leq n\}$ is an $n$-scheme of words for the semigroup variety generated by $S$.
\item[(2)] If the variety generated by $S$ contains $\mathcal{SL}$ and $n\geq |S|+4$ 
then $c(\ww_{ij})=X_n\setminus\{x_i\}$.
\end{itemize}
\end{lem}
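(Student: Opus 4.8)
```latex
The plan is to prove the two parts separately, with part (1) being essentially a
packaging of the definitions and part (2) requiring the genuine combinatorial work.

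For part (1), I would argue that conditions (D), (C1), (C2) for the family
$\{\ww_{ij}\}$ follow directly from the fact that each $\ww_{ij}$ induces the minor
$f_{ij}$. The dependency condition (D) is immediate: by the remark following the
definition of identification minors, $f_{ij}$ does not depend on $x_i$, and since
$\ww_{ij}^S = f_{ij}$ on $S$, the word operation $\ww_{ij}^S$ does not depend on
$x_i$, which (since $S$ generates $\V$) gives (D). For the consistency conditions,
the point is that identifying two pairs of variables commutes at the level of the
operation $f$. Concretely, for four distinct indices with $i<j$, $p<q$, both
$(f_{ij})_{pq}$ and $(f_{pq})_{ij}$ equal the operation obtained from $f$ by
identifying $x_i$ with $x_j$ and $x_p$ with $x_q$ simultaneously; translating
through $\ww_{ij}^S=f_{ij}$ and $\ww_{pq}^S=f_{pq}$ and using that $S$ generates
$\V$ yields $\ww_{ij}^{(pq)} \id \ww_{pq}^{(ij)}$ in $\V$, which is (C1). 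Condition
(C2) is the analogous three-variable coincidence, where identifying $x_i, x_j, x_k$
down to a single variable can be reached by three different orders of pairwise
identification, all agreeing as operations derived from $f$; this is exactly the
content of Lemma~\ref{lem:scheme} applied to an operation rather than a term, so
I would simply verify that the scheme $\{\ww_{ij}\}$ is $\V$-equivalent to the
minors of a single (hypothetical) word would be too strong, so instead I check
(D), (C1), (C2) directly as above.

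For part (2), I would exploit that $\V \supseteq \mathcal{SL}$, so every identity
holding in $\V$ must hold in the variety of semilattices, where an identity
$\uu \id \vv$ holds if and only if $c(\uu)=c(\vv)$. The strategy is to pin down the
content of $\ww_{ij}$ using (a) the non-dependence of $f_{ij}$ on $x_i$ and (b) the
fact that $f$ itself depends on all of its variables. First, since $\ww_{ij}^S$ does
not depend on $x_i$ and $\V \supseteq \mathcal{SL}$, the letter $x_i$ cannot occur
in $\ww_{ij}$ (were it present, the semilattice reduct would detect dependence on
$x_i$); hence $c(\ww_{ij}) \subseteq X_n \setminus \{x_i\}$. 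For the reverse
inclusion I must show every $x_k$ with $k\neq i$ actually occurs in $\ww_{ij}$.
Suppose some $x_k$ (with $k \notin \{i,j\}$) were absent from $\ww_{ij}$. Then
$f_{ij}$ does not depend on $x_k$; I would push this information around the scheme
via the consistency conditions to deduce that $f$ itself cannot depend on $x_k$
(or on $x_i$), contradicting that $f$ is essential. This is where the hypothesis
$n \geq |S|+4$ enters: it guarantees enough ``spare'' variables distinct from
$i,j,k$ so that the relevant instances of (C1)/(C2) are available and the cardinality
bound lets one evaluate $f$ on inputs drawn from $S$ to detect genuine dependence.

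The main obstacle I anticipate is the reverse inclusion in part (2): transferring
the absence of a letter in a single minor $\ww_{ij}$ into a statement about the
essentiality of $f$. The delicate point is that non-dependence of an operation on a
variable is a statement about the operation on the finite set $S$, not a purely
syntactic fact about words, so I cannot argue by content alone except through the
semilattice reduct. I expect to combine the semilattice projection (to control
content) with a direct evaluation argument on $S^n$: fixing all but the $k$th
coordinate and using the consistency conditions to rewrite, via a chain of minors,
the value of $f$ in a way that visibly ignores $x_k$. Keeping track of which
identifications are legal requires the four distinct indices demanded by (C1), which
is precisely why the threshold is $|S|+4$ rather than $|S|$; checking that this
threshold suffices and that no degenerate small-content cases slip through will be
the fiddly part.
```
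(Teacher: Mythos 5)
Your proposal follows the paper's argument in all essentials, just run in contrapositive: part (1) is the routine transfer of (D), (C1), (C2) through identities of $S$ (the paper dismisses it as analogous to Lemma~\ref{lem:scheme}), and part (2) rests on the same three ingredients the paper uses --- the semilattice content criterion, condition (C1), and the pigeonhole principle supplied by $n\geq|S|+4$. The paper argues directly: given $k\neq i$, it takes witnesses of the dependence of $f$ on $x_k$, uses pigeonhole to find $p<q$ outside $\{i,j,k\}$ with $a_p=a_q$, concludes that $f_{pq}=\ww_{pq}^S$ depends on $x_k$, hence $x_k\in c(\ww_{pq})$, and then (C1) together with $\mathcal{SL}\subseteq\V$ (so that identities of $\V$ preserve content) forces $x_k\in c(\ww_{ij})$. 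Your contrapositive is this argument read backwards, and it can be made to work.

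There is, however, one genuine gap as written: you run the argument only for $k\notin\{i,j\}$, so you establish $c(\ww_{ij})\supseteq X_n\setminus\{x_i,x_j\}$ rather than the asserted equality. The occurrence of $x_j$ in $\ww_{ij}$ is exactly what later results need (e.g.\ Lemma~\ref{lem:perm}, where $i_2(\ww_{ij})$ must be a word of length $n-1$), so the case $k=j$ cannot be dropped. It needs no new idea --- $i,j,p,q$ are still four distinct indices for (C1), and $x_j\notin c(\ww_{pq}^{(ij)})$ forces $x_j\notin c(\ww_{pq})$ --- but it must be said; the paper's formulation (``$k\neq i$'', with $p,q\notin\{i,j,k\}$) covers it with no case split at all. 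A second caution: your sentence ``Then $f_{ij}$ does not depend on $x_k$; \dots\ deduce that $f$ itself cannot depend on $x_k$'' is not a valid inference from the single minor $f_{ij}$, since a minor records $f$ only on tuples whose $i$th and $j$th coordinates agree. The deduction must pass through the minors $f_{pq}$ with $p,q\notin\{i,j,k\}$: transfer $x_k\notin c(\ww_{ij})$ to $x_k\notin c(\ww_{pq})$ via (C1) and the semilattice content criterion, and only then invoke pigeonhole to factor an arbitrary evaluation of $f$ through such an $f_{pq}$. Your final paragraph does indicate this route, so this is a matter of assembling the steps in the right order rather than a missing idea --- but as stated the implication is false, and the quantifier order (witnesses of dependence first, then the choice of $p,q$ avoiding $i,j,k$) is precisely the keystone the sketch leaves out.
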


\begin{proof}
(1) Easy, and largely analogous to Lemma \ref{lem:scheme}, dealing with minors of operations instead of terms.

(2) Since $f$ depends on all of its variables (and thus in particular on $x_k$, $k\neq i$), there exist 
$a_1,\dots,a_{k-1},a_{k+1},\dots,a_n,b,c\in S$ such that 
$$f(a_1,\dots,a_{k-1},b,a_{k+1},\dots,a_n)\neq f(a_1,\dots,a_{k-1},c,a_{k+1},\dots,a_n).$$
The pigeonhole principle ensures that there exist $p,q\in\nn\setminus\{i,j,k\}$, $p<q$, such that $a_p=a_q$. Hence,
just as in \cite[Lemma 2.6]{Ma}, $f_{pq}=\ww_{pq}^S$ depends on $x_k$. However, by (1), $S$ satisfies the identity
$\ww_{pq}^{(ij)}\id\ww_{ij}^{(pq)}$ and so $x_k\in c(\ww_{pq}^{(ij)})=c(\ww_{ij}^{(pq)})$, ensuring that $\ww_{ij}$
must contain $x_k$, since $k\neq i$.
\end{proof} 

In the following, $n$-schemes of words $\{\ww_{ij}:\ 1\leq i<j\leq n\}$ over $X_n$ such that 
$c(\ww_{ij})=X_n\setminus\{x_i\}$ will be called \emph{essential}; so, the above result states that essential
operations on finite semigroups whose varieties contain nontrivial semilattices with all minors induced by words
give rise to essential $n$-schemes, provided $n$ is large enough.

\begin{lem}\label{lem:perm}
Let $\{\ww_{ij}:\ 1\leq i<j\leq n\}$ be an essential $n$-scheme of words for a semigroup variety $\V$ containing
$\BB_2$ (the variety of left regular bands), where $n\geq 5$. Then there exists a unique permutation $\pi$ of $\nn$
such that for any $i<j$,
$$
i_2(\ww_{ij}) = x_{\alpha_1}\cdots x_{\alpha_{n-1}},
$$
where the sequence $\pi^{(ij)}=(\alpha_1,\dots,\alpha_{n-1})$ is obtained from $(1\pi,\dots,n\pi)$ by replacing $i$ 
by $j$ and then deleting the right one of the two occurrences of $j$. Furthermore, if the given essential scheme 
comes from a word $\ww$ then we must have $i_2(\ww)=x_{1\pi}\cdots x_{n\pi}$.
\end{lem}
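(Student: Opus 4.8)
The plan is to exploit the defining property of a left regular band, namely that $\BB_2$ satisfies $xyx\id xy$, so that in any word over $\BB_2$ only the \emph{first} occurrence of each letter matters; this is exactly what the operator $i_2$ records. Since the scheme lives in a variety $\V\supseteq\BB_2$, every identity $\ww_{ij}^{(pq)}\id\ww_{pq}^{(ij)}$ guaranteed by the consistency condition (C1), and every instance of (C2), descends to the quotient describing $\BB_2$, and there it becomes an \emph{equality} of the words $i_2(\cdot)$. Because the scheme is essential, $c(\ww_{ij})=X_n\setminus\{x_i\}$, so each $i_2(\ww_{ij})$ is a word in which exactly the $n-1$ letters other than $x_i$ each appear once; in other words $i_2(\ww_{ij})$ is literally a linear arrangement (a sequence without repetition) of $\nn\setminus\{i\}$. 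The whole problem thus becomes combinatorial: I have a family of orderings of the sets $\nn\setminus\{i\}$, one for each unordered pair via the dependency rule, subject to compatibility relations, and I must show they all arise by deletion from a single global ordering $(1\pi,\dots,n\pi)$.

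First I would set up notation: write $i_2(\ww_{ij})$ as the sequence of its letters, which by essentiality and (D) is a permutation of the $n-1$ symbols in $\nn\setminus\{i\}$ (recall (D) says $\ww_{ij}$ does not depend on $x_i$, consistent with $x_i\notin c(\ww_{ij})$). The substitution $\ww_{ij}^{(ij)}$ replaces $x_i$ by $x_j$, but since $x_i$ is absent this does nothing at the level of $i_2$; more importantly, applying $(pq)$ to $\ww_{ij}$ and reading $i_2$ amounts to taking the sequence for $\ww_{ij}$, renaming $p\mapsto q$ (or merging), and keeping first occurrences. The strategy is to \emph{reconstruct} a single linear order $<_\pi$ on all of $\nn$ whose restriction-with-deletion yields every $i_2(\ww_{ij})$. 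Concretely, I would define a relation $k\prec\ell$ on $\nn$ by declaring, for any pair, that $k$ precedes $\ell$ whenever some $i_2(\ww_{ij})$ (with $i,j\notin\{k,\ell\}$) lists $k$ before $\ell$; I then must show this relation is well-defined (independent of which scheme word witnesses it, using (C1)), total, and transitive, so that it is a strict linear order, whose enumeration is the sought permutation $\pi$.

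The main obstacle will be verifying \textbf{consistency and transitivity of the reconstructed order}, i.e.\ that the local orderings glue coherently. For two symbols $k,\ell$ that both lie in $\nn\setminus\{i\}$, several scheme words $\ww_{ij}$ (for various $j$, and symmetrically $\ww_{i'j'}$) record their relative order, and I must check these never disagree; this is precisely where (C1) is used, since it equates $i_2(\ww_{ij}^{(pq)})$ with $i_2(\ww_{pq}^{(ij)})$ and lets me transport the order of a fixed pair between different index sets. Transitivity — given $k\prec\ell$ and $\ell\prec m$ deduce $k\prec m$ — requires a scheme word that simultaneously sees all three of $k,\ell,m$; here I need $n\geq 5$ so that for any triple there remain enough free indices $i<j$ outside $\{k,\ell,m\}$ to form a legitimate $\ww_{ij}$ containing all three, and I would invoke (C1)/(C2) to reconcile the pairwise orders inside that single word. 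Uniqueness of $\pi$ is then immediate, since $\pi$ is forced to enumerate $\nn$ in the order $\prec$; and the displayed formula for $\pi^{(ij)}$ follows by checking that deleting index $i$ from $(1\pi,\dots,n\pi)$, renaming via the $(ij)$-substitution, and taking $i_2$ reproduces $i_2(\ww_{ij})$. Finally, if the scheme comes from a word $\ww$, then $\ww_{ij}\id\ww^{(ij)}$ in $\V$, hence in $\BB_2$, giving $i_2(\ww_{ij})=i_2(\ww^{(ij)})$ for all $i<j$; comparing with the reconstructed orderings forces $i_2(\ww)=x_{1\pi}\cdots x_{n\pi}$, which is the last assertion.
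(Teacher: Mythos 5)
Your proposal takes a genuinely different route from the paper, and its core is viable, but one step of the lemma's statement is not actually covered by your argument. For comparison: the paper does not reconstruct the order combinatorially at all. It observes that the given scheme is in particular a scheme for $\BB_2$, invokes Mayr's result (\cite[Lemma 6.1]{Ma}) that the three-element band $B_2$ is finitely related with degree at most $4$, together with \cite[Lemma 2.6]{DJPSz}, which yields a \emph{unique} operation $f$ on $B_2$ with $f_{ij}=\ww_{ij}^{B_2}$; after checking that $f$ is essential, $f$ is induced by a word $\ww$ with $c(\ww)=X_n$, and $\pi$ is simply read off from $i_2(\ww)$, with uniqueness of $\pi$ falling out of the uniqueness of $f$. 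Your approach replaces this by an elementary reconstruction of a linear order $\prec$ from (C1)/(C2) and the combinatorics of $i_2$; it is longer but self-contained, avoiding any appeal to the finite relatedness of $B_2$. Your treatment of $\prec$ itself is sound: disjoint witnessing pairs are reconciled by (C1); overlapping witnessing pairs have a union which is a triple avoiding $\{k,\ell\}$, so (C2) transports the order; and transitivity needs a pair avoiding a given triple, which is one place where $n\ge 5$ enters.

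The genuine gap is the displayed formula for $\pi^{(ij)}$, which you dismiss as "follows by checking". Your relation $\prec$ only controls the relative order inside $i_2(\ww_{ij})$ of two letters $x_k,x_\ell$ with $k,\ell\notin\{i,j\}$; it says nothing about where the letter $x_j$ sits inside $i_2(\ww_{ij})$, because $(i,j)$ is not a legitimate witness for any pair of indices containing $j$. Yet that placement is precisely the content of the clause "replace $i$ by $j$ and delete the right occurrence of $j$": you must prove that $x_j$ precedes $x_k$ in $i_2(\ww_{ij})$ if and only if $i\prec k$ or $j\prec k$, i.e.\ that $x_j$ occupies the position of the $\prec$-earlier of $i$ and $j$, not of $j$ itself. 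This does not follow from the definition of $\prec$; it needs one further application of (C1): pick $p<q$ with $\{p,q\}\cap\{i,j,k\}=\es$ (again requiring $n\ge 5$), use $i_2(\ww_{ij}^{(pq)})=i_2(\ww_{pq}^{(ij)})$, note that the left-hand side preserves the relative order of $x_j,x_k$ from $\ww_{ij}$, while on the right-hand side the first occurrence of $x_j$ in $\ww_{pq}^{(ij)}$ is the earlier of the first occurrences of $x_i$ and $x_j$ in $\ww_{pq}$, whose positions relative to $x_k$ are governed by $\prec$ with witness $(p,q)$. Two smaller points: in the final assertion you should first note that $c(\ww)=X_n$ (compare contents of $i_2(\ww^{(ij)})$ for two different pairs), and your description of the check ("deleting index $i$ from $(1\pi,\dots,n\pi)$, renaming via the $(ij)$-substitution") is not the right operation -- deleting $i$ first leaves nothing to rename, and when $i$ occurs before $j$ in $\pi$ the correct sequence is not obtained by deleting $i$, since $j$ must move up to the slot vacated by $i$.
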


\begin{proof}
By the very definition of an (essential) $n$-scheme of words for a variety, any scheme for a variety $\V$ is also
a scheme for any of its subvarieties, and so is the case for the given scheme with respect to $\BB_2$. As already
discussed, $\BB_2$ is generated by the 3-element band $B_2$ (obtained by adjoining an identity element to the
2-element left zero band), and by Lemma 6.1 of \cite{Ma}, $B_2$ is finitely related with degree at most 4. By
\cite[Lemma 2.6]{DJPSz}, there exists a unique operation $f:B_2^n\to B_2$ such that $f_{ij}=\ww_{ij}^{B_2}$ for all
$1\leq i<j\leq n$. To see that $f$ depends on all of its variables, fix $k\in\nn$ and $p,q\in\nn\setminus\{k\}$,
$p<q$. As $f_{pq}=\ww_{pq}^{B_2}$ and $x_k\in c(\ww_{pq})$, $f_{pq}$ depends on $x_k$, so there exist $a_1,\dots,
a_{k-1},a_{k+1},\dots,a_n,b,c\in B_2$ such that $f_{pq}(a_1,\dots,a_{k-1},b,a_{k+1},\dots,a_n)\neq f_{pq}(a_1,\dots,
a_{k-1},c,a_{k+1},\dots,a_n)$. This immediately implies that $f$ depends on $x_k$.

Hence, by \cite[Lemma 6.1]{Ma}, $f$ is induced by a word $\ww$ such that $c(\ww)=X_n$. Now it is routine to
see that the permutation $\pi\in\mathbb{S}_\nn$ satisfying 
$$
i_2(\ww) = x_{1\pi}\cdots x_{n\pi}
$$
meets all the requirements of the lemma. It is unique because if $\pi'$ would be another permutation with the
required properties, then it would follow that the given scheme (with respect to $\BB_2$) also comes from the 
word $\ww'=x_{1\pi'}\cdots x_{n\pi'}$, whence the uniqueness of $f$ implies that $(\ww')^{B_2}=f$. Therefore,
$\BB_2$ satisfies $\ww'\id\ww$, implying $i_2(\ww')=i_2(\ww)$ and so $\pi'=\pi$.
\end{proof}

For an essential scheme of words satisfying the assumptions of the above lemma, we will call $\pi$ the \emph{associated
permutation} of the scheme.

We proceed with a technical lemma before we learn another important property of essential $n$-schemes of words over
irreducible band varieties.

\begin{lem}\label{lem:spq}
Let $k\in\nn$, and let $\ww\in X_n^+$ be such that $c(\ww)=X_n\setminus\{x_k\}$ and $\sigma(\ww)=x_l$. 
Furthermore, let $p,q\in\nn\setminus\{l\}$ such that $p<q$. Then $s(\ww)^{(pq)}=s(\ww^{(pq)})$.
\end{lem}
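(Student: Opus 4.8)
The plan is to argue positionally, exploiting the fact that the substitution $(pq)$ leaves the letter $x_l=\sigma(\ww)$ untouched. Write $\ww=y_1y_2\cdots y_N$ with each $y_a\in X_n$, and let $r$ be the position at which $\sigma(\ww)=x_l$ first occurs; by the definitions of $s$ and $\sigma$ we then have $y_r=x_l$, $y_a\neq x_l$ for all $a<r$, and $s(\ww)=y_1\cdots y_{r-1}$, a word of content $c(\ww)\setminus\{x_l\}$. Since the operator $(pq)$ acts letter by letter, $\ww^{(pq)}=y_1'\cdots y_N'$, where $y_a'=x_q$ if $y_a=x_p$ and $y_a'=y_a$ otherwise. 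The crucial observation is that, because $p\neq l$ and $q\neq l$, the letter $x_l$ is a fixed point of this substitution that is neither created nor destroyed: $y_a'=x_l$ holds precisely when $y_a=x_l$.

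From this I would first deduce that the first occurrence of $x_l$ in $\ww^{(pq)}$ is again at position $r$, and that $y_a'\neq x_l$ for all $a<r$. Next I would compute the content of the prefix $y_1'\cdots y_{r-1}'=s(\ww)^{(pq)}$: it is obtained from $c(s(\ww))=c(\ww)\setminus\{x_l\}$ by replacing $x_p$ with $x_q$, and since $l\neq p,q$ this replacement commutes with deleting $x_l$, so the content equals $c(\ww^{(pq)})\setminus\{x_l\}$, which has size $|c(\ww^{(pq)})|-1$. Hence $y_1'\cdots y_{r-1}'$ is a prefix of $\ww^{(pq)}$ missing exactly the letter $x_l$, while the next letter $y_r'=x_l$ completes the content; any longer prefix already contains $x_l$ and thus has full content. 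By the defining maximality of $s$, this identifies $s(\ww^{(pq)})=y_1'\cdots y_{r-1}'$ (and, incidentally, $\sigma(\ww^{(pq)})=x_l$).

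Finally, since $(pq)$ acts position-wise, truncating before position $r$ and substituting commute, giving $s(\ww)^{(pq)}=(y_1\cdots y_{r-1})^{(pq)}=y_1'\cdots y_{r-1}'=s(\ww^{(pq)})$, which is the desired equality. I expect the only point requiring genuine care to be the content computation in the middle step, namely verifying that after $x_p$ and $x_q$ are possibly merged the prefix $y_1'\cdots y_{r-1}'$ still omits exactly one letter (namely $x_l$) and is maximal with that property; everything hinges on $x_l$ being fixed, so that the cut point $r$ is preserved and the argument reduces to the commutation of substitution with truncation at $r$. The cases where $x_p$ or $x_q$ fails to occur in $\ww$ (for instance when $p$ or $q$ equals $k$) need no separate treatment, as the positional description covers them automatically.
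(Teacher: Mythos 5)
Your proof is correct and takes essentially the same approach as the paper's: both decompose $\ww=s(\ww)x_l\uu$ (your positional cut at the first occurrence of $x_l$ is exactly this factorization), observe that the substitution $(pq)$ fixes $x_l$ because $p,q\neq l$, and then identify $s(\ww)^{(pq)}$ as the longest prefix of $\ww^{(pq)}$ omitting exactly one letter, namely $x_l$. The only difference is cosmetic: the paper verifies the content condition by explicit computation from $c(\ww)=X_n\setminus\{x_k\}$, split into the cases $q\neq k$ and $q=k$, whereas you obtain $c(s(\ww)^{(pq)})=c(\ww^{(pq)})\setminus\{x_l\}$ uniformly, which incidentally shows that the hypothesis on $c(\ww)$ is not actually needed.
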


\begin{proof}
We start by writing $\ww$ in the form
$$
\ww=s(\ww)x_l\uu
$$
for some word $\uu$, implying $\ww^{(pq)}=s(\ww)^{(pq)}x_l\uu^{(pq)}$. If $q\neq k$ then 
$c(\ww^{(pq)})=X_n\setminus\{x_k,x_p\}$ and $c(s(\ww)^{(pq)})=X_n\setminus\{x_l,x_k,x_p\}$, which
immediately gives $s(\ww^{(pq)})=s(\ww)^{(pq)}$. The same conclusion follows if $q=k$ upon noticing
that in such a case we have $c(\ww^{(pq)})=X_n\setminus\{x_p\}$ and $c(s(\ww)^{(pq)})=X_n\setminus\{x_l,x_p\}$.
\end{proof}

\begin{lem}\label{lem:s'}
Let $n\geq 5$, let $\mathcal{S}=\{\ww_{ij}:\ 1\leq i<j\leq n\}$ be an essential $n$-scheme of words for 
$\XX_m$, where $\XX\in\{\AA,\BB\}$ and either $m\geq 3$ or $\XX_m=\BB_2$. Let $\pi$ be the associated permutation 
of $\mathcal{S}$ (which exists by Lemma \ref{lem:perm}) and let $l=n\pi$. Then
$$
\mathcal{S}' = \{s(\ww_{ij}):\ 1\leq i<j\leq n,\ l\not\in\{i,j\}\}
$$
is an $(n-1)$-scheme for $\XX_m$ over variables $X_n\setminus\{x_l\}$.
\end{lem}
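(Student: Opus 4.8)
The goal is to verify that $\mathcal{S}'=\{s(\ww_{ij}):l\notin\{i,j\}\}$ satisfies the three defining conditions (D), (C1), (C2) of an $(n-1)$-scheme over the alphabet $X_n\setminus\{x_l\}$. Reindexing aside, the dependency condition (D) should be the quickest: by Lemma \ref{lem:ess}(2) each $\ww_{ij}$ has content $X_n\setminus\{x_i\}$, and the associated permutation $\pi$ controls $i_2(\ww_{ij})$ via Lemma \ref{lem:perm}; since $l=n\pi$ is the \emph{last} letter to appear in the initial part, for $l\notin\{i,j\}$ the letter $x_l$ is precisely $\sigma(\ww_{ij})$ (the last new letter from the left). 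Consequently $s(\ww_{ij})$ omits both $x_i$ and $x_l$, so $c(s(\ww_{ij}))=X_n\setminus\{x_i,x_l\}$, and $s(\ww_{ij})$ does not depend on $x_i$ in $\XX_m$, giving (D).

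The heart of the argument is translating the consistency identities for $\mathcal{S}$ into consistency identities for $\mathcal{S}'$. The plan is to use the fact that, because $x_l=\sigma(\ww_{ij})$ is never one of the substituted variables in the relevant minors, the substitution operator $(pq)$ and the prefix operator $s$ commute: this is exactly Lemma \ref{lem:spq}, applied with the role of $x_l$ played by $\sigma(\ww_{ij})$. So for indices avoiding $l$ I expect $s(\ww_{ij})^{(pq)}=s(\ww_{ij}^{(pq)})$ and similarly for the other superscripts appearing in (C1)/(C2). Then I would feed the scheme identities for $\mathcal{S}$ — which hold in $\XX_m$ — into Lemma \ref{xm-s}: since $\XX_m$ satisfies $\ww_{ij}^{(pq)}\id\ww_{pq}^{(ij)}$ (and these words have content of size at least two once $n$ is large), Lemma \ref{xm-s} yields $s(\ww_{ij}^{(pq)})\id s(\ww_{pq}^{(ij)})$ in $\XX_m$. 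Combining with the commutation from Lemma \ref{lem:spq} rewrites this as $s(\ww_{ij})^{(pq)}\id s(\ww_{pq})^{(ij)}$, which is precisely (C1) for $\mathcal{S}'$; the condition (C2) follows by the same mechanism applied to the three-index identities.

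The main obstacle will be bookkeeping around the special variable $x_l$: I must ensure that in every instance of (C1) and (C2) for $\mathcal{S}'$ none of the substituted indices $i,j,p,q$ equals $l$ (which is guaranteed by restricting to pairs with $l\notin\{i,j\}$, but must be checked for the superscript substitutions too), so that Lemma \ref{lem:spq} genuinely applies and $x_l$ remains the $\sigma$-letter after substitution. A subtle point is that for the consistency identities the content hypothesis of Lemma \ref{xm-s}, namely $|c(\uu)|,|c(\vv)|\ge 2$, needs verifying; this is where $n\ge5$ is used, since after at most two identifications the content still has size at least $n-2\ge 3$. I would also confirm that $\sigma(\ww_{ij})=x_l$ is stable under the substitutions occurring in the scheme identities, using the characterisation of $\pi$ in Lemma \ref{lem:perm} together with the fact that these substitutions replace a variable by another variable already present, and hence cannot disturb which letter is the last to appear first. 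Once these indexing checks are in place, verifying (D), (C1), (C2) is routine.
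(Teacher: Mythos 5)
Your proposal is correct and follows essentially the same route as the paper's own proof: establish (D) from $\sigma(\ww_{ij})=x_l$ (via Lemma \ref{lem:perm}), then obtain (C1) and (C2) by combining the commutation $s(\ww_{ij})^{(pq)}=s(\ww_{ij}^{(pq)})$ from Lemma \ref{lem:spq} with the identity-preservation under $s$ from Lemma \ref{xm-s}. The bookkeeping concerns you flag (indices avoiding $l$, the content-size hypothesis of Lemma \ref{xm-s}) are exactly the points the paper handles, so filling in your plan would reproduce its argument.
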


\begin{proof}
Note that if $l\not\in\{i,j\}$ then $\sigma(\ww_{ij})=x_l$ and so $c(s(\ww_{ij}))=X_n\setminus\{x_i,x_l\}$.
Thus (D) holds.

Furthermore, if $p<q$ are such that $\{p,q\}\cap\{i,j,l\}=\es$ then by Lemma \ref{lem:spq} we have 
$s(\ww_{ij})^{(pq)}=s(\ww_{ij}^{(pq)})$ and $s(\ww_{pq})^{(ij)}=s(\ww_{pq}^{(ij)})$. Further the assumption that
$\XX_m$ satisfies $\ww_{ij}^{(pq)}\id\ww_{pq}^{(ij)}$ implies, by Lemma \ref{xm-s}, the identity 
$s(\ww_{ij}^{(pq)})\id s(\ww_{pq}^{(ij)})$. Hence, $\XX_m$ satisfies $s(\ww_{ij})^{(pq)}\id s(\ww_{pq})^{(ij)}$,
and so $\mathcal{S}'$ satisfies (C1). 

Now let $i<j<p$ be such that $l\not\in\{i,j,p\}$. Again, Lemma \ref{lem:spq} implies that 
$s(\ww_{ij})^{(jp)}=s(\ww_{ij}^{(jp)})$, $s(\ww_{jp})^{(ip)}=s(\ww_{jp}^{(ip)})$ and 
$s(\ww_{ip})^{(jp)}=s(\ww_{ip}^{(jp)})$ (the lemma was applied over the alphabet $X_n\setminus\{x_i\}$ 
in the first and the third case, while it was applied over $X_n\setminus\{x_j\}$ in the second case). Since, 
by assumption, $\XX_m$ satisfies the identities $\ww_{ij}^{(jp)}\id\ww_{jp}^{(ip)}\id\ww_{ip}^{(jp)}$, it also 
satisfies the identities $s(\ww_{ij}^{(jp)})\id s(\ww_{jp}^{(ip)})\id s(\ww_{ip}^{(jp)})$ by Lemma \ref{xm-s}. 
We conclude that $\XX_m$ satisfies $s(\ww_{ij})^{(jp)}\id s(\ww_{jp})^{(ip)}\id s(\ww_{ip})^{(jp)}$, hence (C2) 
holds for $\mathcal{S}'$ as well.
\end{proof}

We are now ready and equipped to present our main arguments.

\section{The main proofs}
\label{sec:main}

\begin{thm}\label{thm:Tn}
For any $m\geq 2$ and $T\in\{A,B\}$, the bands $T_m$ and $\ol{T_m}$ are finitely related.
\end{thm}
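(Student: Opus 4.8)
The plan is to prove Theorem \ref{thm:Tn} by induction on $m$, using the criterion of Theorem \ref{thm:finrel} in the form (3'). By left-right duality it suffices to handle $A_m$ and $B_m$; and since $\mathcal{A}_m,\mathcal{B}_m$ all lie above $\mathcal{B}_2$ (except for the very bottom of the lattice), the result of Mayr on regular bands --- in particular that the left regular band $B_2$ generating $\mathcal{B}_2$ is finitely related --- serves as the base case. For the inductive step I would fix $\XX\in\{\AA,\BB\}$ with either $m\ge 3$ or $\XX_m=\BB_2$, take $n$ large (say $n\ge |T_m|+4$ so that Lemma \ref{lem:ess} applies), and start from an essential operation $f:T_m^n\to T_m$ all of whose minors $f_{ij}=\ww_{ij}^{T_m}$ are induced by words. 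By Lemma \ref{lem:ess}(2) the family $\mathcal{S}=\{\ww_{ij}\}$ is an \emph{essential} $n$-scheme, and by Lemma \ref{lem:perm} it has a well-defined associated permutation $\pi$; set $l=n\pi$.

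The central idea is to peel off the last letter $x_l$ in the initial part. By Lemma \ref{lem:s'} the ``truncated'' family $\mathcal{S}'=\{s(\ww_{ij}):l\notin\{i,j\}\}$ is an $(n-1)$-scheme for $\XX_m$ over the smaller alphabet $X_n\setminus\{x_l\}$. The plan is to realise $\mathcal{S}'$ as coming from the minors of a suitable essential operation on $T_{m-1}$ (using that the word functions $h_{m-1},i_{m-1}$ governing $\XX_{m-1}$ are precisely what appears after stripping the leading $s\sigma$ block, via the recursion $t_m(\ww)=t_ms(\ww)\sigma(\ww)\ol t_{m-1}(\ww)$ and the commutation identities of Lemma \ref{stm}). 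Invoking the induction hypothesis that $T_{m-1}$ (or rather its dual, because of the alternation $t_m\mapsto\ol t_{m-1}$) is finitely related, I obtain a single word $\ss$ with $s(\ww_{ij})\id \ss^{(ij)}$ in $\XX_{m-1}$, hence an explicit candidate for the ``$s$-part'' of the sought word $\ww$. I would then reconstruct $\ww$ in the form $\ss\,x_l\,(\text{tail})$, where the tail is determined symmetrically by a dual truncation peeling off $\eps$ and $e$, governed by $\ol t_{m}$ and handled by the dual induction hypothesis. Finally, one verifies via Theorem \ref{xm-wp} that $t_m(\ww^{(ij)})=t_m(\ww_{ij})=t_m(f_{ij}\text{'s word})$ for every pair $i<j$, so that $\ww^{T_m}$ and $f$ have identical minors; the uniqueness part of \cite[Lemma 2.6]{DJPSz} then forces $f=\ww^{T_m}$, establishing (3').

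I expect two places to carry the real difficulty. The first is the \emph{gluing} step: knowing the $s$-part $\ss$ and the $e$-part separately, and knowing $\sigma,\eps$ of each $\ww_{ij}$, one must check that a single coherent choice of the middle letters $x_l,\eps$ and of how the two halves overlap is consistent across \emph{all} pairs $(i,j)$ simultaneously --- this is exactly where the consistency conditions (C1),(C2) for $\mathcal{S}$ must be leveraged, and where the alternation between a variety and its dual (forcing the induction to run on both $T_{m-1}$ and $\ol{T_{m-1}}$ together) is essential. The second delicate point is matching contents and the boundary letters $\sigma(\ww_{ij}),\eps(\ww_{ij})$ when the identified pair $\{i,j\}$ interacts with $l$ (the cases $l\in\{i,j\}$, excluded from $\mathcal{S}'$, must be recovered separately), so that the reconstructed $\ww$ yields the \emph{right} minors even for those pairs. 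Managing these boundary cases cleanly, rather than the core induction, is where I anticipate the bulk of the technical work; Lemmas \ref{lem:spq} and \ref{stm}, together with Corollary \ref{cor:bwp} for absorbing redundant factors, are the tools I would lean on to push them through.
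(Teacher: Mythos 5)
Your overall frame (induction on $m$ with Mayr's regular-band theorem as base, essential schemes, the associated permutation $\pi$, the truncated scheme of Lemma \ref{lem:s'}, then gluing a candidate word and verifying it through Theorem \ref{xm-wp}) matches the paper's, but your construction of the word has a genuine gap which the paper's construction is specifically designed to close. Write your candidate as $\ww=\uu' x_l \vv$, where the head $\uu'$ comes from the induction hypothesis applied to $\mathcal{S}'$. All that induction can give you is that $(\uu')^{(ij)}\id s(\ww_{ij})$ holds in $\ol{\XX_{m-1}}$ (the dual variety is the usable one, since the recursion produces $\ol{t}_{m-1}$; your ``in $\XX_{m-1}$'' is off, but either way it is a level-$(m-1)$ statement). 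However, to verify $\ww^{(ij)}\id\ww_{ij}$ in $\XX_m$ you must, by the recursion $t_m(\ww)=t_ms(\ww)\sigma(\ww)\ol{t}_{m-1}(\ww)$, control $t_m s(\ww^{(ij)})=t_m((\uu')^{(ij)})$ at the \emph{top} level $m$, and level-$(m-1)$ agreement of $(\uu')^{(ij)}$ with $s(\ww_{ij})$ is strictly weaker: $\ol{t}_{m-1}$-equivalence says nothing about $t_m$ of the head. The same defect affects your tail, with the extra problem that a part ``governed by $\ol{t}_m$'' cannot be handled by induction at all, since $\ol{T_m}$ sits at the same level as the band being treated. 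The paper closes this gap by a device absent from your proposal: the word is $\ww=s(\ww_{k'l'})x_k\ut x_l\uh$ with $k=(n-1)\pi$, $l=n\pi$, where the leading block $s(\ww_{k'l'})$ is an honest scheme word's $s$-part. For pairs $\{p,q\}$ disjoint from $\{k,l\}$, the top-level datum is then supplied by the scheme's own consistency condition (C1) together with Lemma \ref{xm-s}, which give $s(\ww_{k'l'}^{(pq)})\id s(\ww_{pq}^{(k'l')})=s^2(\ww_{pq})$ in $\XX_m$; the induction hypothesis over $\ol{\XX_{m-1}}$ is invoked twice, for $\mathcal{S}'$ (giving $\ut$) and for the \emph{full} scheme $\mathcal{S}$ (giving the tail $\uh$), to supply all $\ol{t}_{m-1}$ data, after which prefix absorption by idempotency finishes the computation. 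No dual ($e$-part) truncation is used or needed.

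Your second anticipated difficulty --- the pairs with $l\in\{i,j\}$ --- is a symptom of setting up the wrong final step rather than a real obstacle. You plan to show that \emph{all} minors of $\ww$ agree with the $\ww_{ij}$ and then invoke the uniqueness statement of \cite[Lemma 2.6]{DJPSz}, which indeed would force you to treat those boundary pairs. The paper never proves $\ww^{(pq)}\id\ww_{pq}$ for pairs meeting $\{k,l\}$: since $n\ge|T_m|+4$, for any $\ol{a}\in T_m^n$ the pigeonhole principle yields $p<q$ with $a_p=a_q$ and $\{p,q\}\cap\{k,l\}=\es$, whence
$$
f(\ol{a})=f_{pq}(\ol{a})=\ww_{pq}^{T_m}(\ol{a})=(\ww^{(pq)})^{T_m}(\ol{a})=\ww^{T_m}(\ol{a}),
$$
so the restricted claim alone gives $f=\ww^{T_m}$. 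Thus the boundary cases need no recovery; what does need recovering, and what your proposal lacks, is a source of level-$m$ information about the head of the constructed word.
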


\begin{proof}
We use induction on $m$. If $m=2$, the result already follows from \cite[Theorem 6.2]{Ma}. Hence fix $m\geq 3$
and assume that the statement is true for values of indices up to $m-1$.

We are going to show that $T_m$ is finitely related by verifying condition (3') in Theorem \ref{thm:finrel}
with $n\geq n_0 = \max(|T_m|+4,m+3)$. So, under these assumptions, let $f:T_m^n\to T_m$ be an operation that
depends on all of its variables such that for all $i,j\in\nn$, $i<j$, we have that the operation $f_{ij}$ is 
induced by a word. Let us select words $\ww_{ij}\in X_n^+$, $1\leq i<j\leq n$, such that $f_{ij}=\ww_{ij}^{T_m}$. 
Since the variety $\XX_m$ generated by $T_m$ contains $\mathcal{SL}$, Lemma \ref{lem:ess} tells us that
$c(\ww_{ij})=X_n\setminus\{x_i\}$, so that $\mathcal{S}=\{\ww_{ij}:\ 1\leq i<j\leq n\}$ is an essential $n$-scheme 
of words for $\XX_m$.

Since $\BB_2$ is contained in $\XX_m$ and $n\geq m+3>5$, Lemma \ref{lem:perm} ensures the existence (and uniqueness)
of the associated permutation $\pi$ of the scheme $\mathcal{S}$. Let us denote $k=(n-1)\pi$ and $l=n\pi$.

Now, since $\mathcal{S}$ is an $n$-scheme of words for $\XX_m$, it is also an $n$-scheme of words for any of its
subvarieties, and thus in particular for $\ol{\XX_{m-1}}$. This variety is generated by the band $\ol{T_{m-1}}$
which is finitely related by the inductive assumption. Hence, Theorem \ref{thm:finrel} implies that $\mathcal{S}$,
considered as a scheme for $\ol{\XX_{m-1}}$, comes from a word, say $\uh$. Equivalently, there is a word operation
$g:\ol{T_{m-1}}^n\to\ol{T_{m-1}}$ such that $g_{ij}=\ww_{ij}^{\ol{T_{m-1}}}$ holds for all $1\leq i<j\leq n$.

Similarly, by Lemma \ref{lem:s'}, the family of words $\mathcal{S}'=\{s(\ww_{ij}):\ 1\leq i<j\leq n,\ l\not\in
\{i,j\}\}$ is an $(n-1)$-scheme of words for $\XX_m$ (and thus for $\ol{\XX_{m-1}}$) over the set of variables
$X_n\setminus\{x_l\}$. Again, by employing the inductive hypothesis (and bearing in mind that $n-1\geq (m-1)+3$
and $n-1\geq (|T_m|-1)+4\geq |\ol{T_{m-1}}|+4$), we find a word $\ut$ such that $\mathcal{S}'$ comes from $\ut$;
in other words, there is a word operation $h:\ol{T_{m-1}}^{n-1}\to\ol{T_{m-1}}$ with $h_{ij}=
s(\ww_{ij})^{\ol{T_{m-1}}}$ for all $i<j$ such that $l\not\in\{i,j\}$.

Now define
$$
\ww = s(\ww_{k'l'})x_k\ut x_l\uh,
$$
where $k'=\min(k,l)$ and $l'=\max(k,l)$. We are going to argue that $f=\ww^{T_m}$ whence Theorem \ref{thm:finrel} 
yields that $T_m$ is finitely related.

\begin{cla}
If $p,q\in\nn$, $p<q$, are such that $\{p,q\}\cap\{k,l\}=\es$ then the identity 
$$ \ww^{(pq)} \id \ww_{pq} $$
holds in $\XX_m$ (and so in $T_m$).
\end{cla}

\begin{proof}[Proof of Claim.]
We have 
\begin{equation}\label{eq1}
\ww^{(pq)} = (s(\ww_{k'l'}))^{(pq)}x_k\ut^{(pq)}x_l\uh^{(pq)}=s(\ww_{k'l'}^{(pq)})x_k\ut^{(pq)}x_l\uh^{(pq)},
\end{equation}
where the second equality follows by Lemma \ref{lem:spq} using $c(\ww_{k'l'})=X_n\setminus\{x_{k'}\}$. Furthermore, 
by (C1), $\XX_m$ satisfies $\ww_{k'l'}^{(pq)}\id \ww_{pq}^{(k'l')}$, so by Lemma \ref{xm-s} it also satisfies 
$s(\ww_{k'l'}^{(pq)})\id s(\ww_{pq}^{(k'l')})$. Since $\{p,q\}\cap\{k,l\}=\es$, the sequence $\pi^{(pq)}$
must have the form $(\alpha_1,\dots,\alpha_{n-3},k,l)$ (where the subsequence $(\alpha_1,\dots,\alpha_{n-3})$
is a certain permutation of $\nn\setminus\{k,l,p\}$). Therefore, we may highlight the first occurrence from
the left of each letter from $c(\ww_{pq})$ in $\ww_{pq}$ by writing
$$
\ww_{pq}=x_{\alpha_1}\uu_1x_{\alpha_2}\cdots x_{\alpha_{n-3}}\uu_{n-3}x_k\uu_{n-2}x_l\vv
$$
for some (possibly empty) words $\uu_1,\dots,\uu_{n-2},\vv$. This yields
$$
\ww_{pq}^{(k'l')}=x_{\alpha_1}\uu_1x_{\alpha_2}\cdots x_{\alpha_{n-3}}\uu_{n-3}
x_{l'}\uu_{n-2}^{(k'l')}x_{l'}\vv^{(k'l')},
$$
implying 
$$
s(\ww_{pq}^{(k'l')})=x_{\alpha_1}\uu_1x_{\alpha_2}\cdots x_{\alpha_{n-3}}\uu_{n-3}=s^2(\ww_{pq}).
$$
Bearing in mind \eqref{eq1}, we deduce that
$$
\ww^{(pq)} \id  s(\ww_{pq}^{(k'l')})x_k\ut^{(pq)}x_l\uh^{(pq)} = s^2(\ww_{pq})x_k\ut^{(pq)}x_l\uh^{(pq)}
$$
holds in $\XX_m$. By applying Theorem \ref{xm-wp} and the recursion for $t_m$ twice, we get
\begin{align*}
t_m(\ww^{(pq)}) &= t_m\bigl(s^2(\ww_{pq})x_k\ut^{(pq)}x_l\uh^{(pq)}\bigr) \\
&= t_m\bigl(s^2(\ww_{pq})x_k\ut^{(pq)}\bigr)x_l\ol{t}_{m-1}\bigl(s^2(\ww_{pq})x_k\ut^{(pq)}x_l\uh^{(pq)}\bigr)\\
&= t_ms^2(\ww_{pq})x_k\ol{t}_{m-1}\bigl(s^2(\ww_{pq})x_k\ut^{(pq)}\bigr)x_l\ol{t}_{m-1}\bigl(s^2(\ww_{pq})x_k\ut^{(pq)}x_l\uh^{(pq)}\bigr).
\end{align*}
However, by induction assumption on $\mathcal{S}'$ for $\ol{\XX_{m-1}}$ we have that $\ol{\XX_{m-1}}$ satisfies 
$\ut^{(pq)}\id s(\ww_{pq})$, and by induction assumption on $\mathcal{S}$ for $\ol{\XX_{m-1}}$ we have that 
$\ol{\XX_{m-1}}$ satisfies $\uh^{(pq)}\id \ww_{pq}$. We know that $\sigma(\ww_{pq})=x_l$, $s(\ww_{pq})= 
x_{\alpha_1}\cdots x_{\alpha_{n-3}}\uu_{n-3}x_k\uu_{n-2}$, $\sigma s(\ww_{pq})=x_k$, and $s^2(\ww_{pq})=
x_{\alpha_1}\cdots x_{\alpha_{n-3}}\uu_{n-3} $. Thus
$$
s^2(\ww_{pq})x_k\ut^{(pq)} \id s^2(\ww_{pq})\sigma s(\ww_{pq})s(\ww_{pq}) \id s(\ww_{pq})
$$
and consequently
$$
s^2(\ww_{pq})x_k\ut^{(pq)}x_l\uh^{(pq)} \id s(\ww_{pq})\sigma(\ww_{pq})\ww_{pq} \id \ww_{pq}
$$
holds in $\ol{\XX_{m-1}}$ (here we used the idempotent law, upon noticing that $s^2(\ww_{pq})\sigma s(\ww_{pq})$
is a prefix of $s(\ww_{pq})$, while $s(\ww_{pq})\sigma(\ww_{pq})$ is a prefix of $\ww_{pq}$). Hence,
\begin{align*}
t_m(\ww^{(pq)}) &= t_ms^2(\ww_{pq})x_k\ol{t}_{m-1}s(\ww_{pq})x_l\ol{t}_{m-1}(\ww_{pq}) \\
&= t_ms(\ww_{pq})x_l\ol{t}_{m-1}(\ww_{pq}) = t_m(\ww_{pq}).
\end{align*}
Another application of Theorem \ref{xm-wp} concludes the proof of the claim that $\ww^{(pq)}\id\ww_{pq}$ holds
in $\XX_m$.
\phantom\qedhere\end{proof}

Let now $\ol{a}=(a_1,\dots,a_n)\in T_m^n$ be arbitrary. As $n\geq |T_m|+4>|T_m|+3$, by the pigeohole principle there
are $p,q\in\nn$, $p<q$, such that $a_p=a_q$ and $\{p,q\}\cap\{k,l\}=\es$. Thus
$$
f(\ol{a}) = f_{pq}(\ol{a}) = \ww_{pq}^{T_m}(\ol{a}) = (\ww^{(pq)})^{T_m}(\ol{a}) = (\ww^{T_m})_{pq}(\ol{a}) = 
\ww^{T_m}(\ol{a}),
$$
where the previous claim was used in the third equality above. Therefore, $f=\ww^{T_m}$, showing that $T_m$
is finitely related.

The result for $\ol{T_m}$ follows by left-right duality or simply by the observation that if a finite semigroup $S$
is finitely related, so is its dual semigroup $\ol{S}$. (Indeed, if $\{\uu_{ij}:\ 1\leq i<j\leq n\}$ is an $n$-scheme
of words for the variety $\ol{\V}$ generated by $\ol{S}$ then it is a routine to show that 
$\{\ol{\uu_{ij}}:\ 1\leq i<j\leq n\}$ is an $n$-scheme of words for the variety $\V$ generated by $S$; so, if the
latter scheme comes from a word $\uu$ it follows immediately that the former one comes from $\ol{\uu}$. Now an
appeal to Theorem \ref{thm:finrel} gives the required result.)
\end{proof}

\begin{thm}\label{thm:prod}
For any $m\geq 3$, the bands $A_m\times\ol{A_m}$, $A_m\times\ol{B_{m-1}}$, $B_{m-1}\times\ol{A_m}$, 
$B_m\times\ol{B_m}$, $B_m\times\ol{A_m}$ and $A_m\times\ol{B_m}$ are finitely related.
\end{thm}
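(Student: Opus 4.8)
The plan is to reduce the statement to Theorem~\ref{thm:Tn} by exploiting the fact that finite relatedness is preserved under taking finite direct products of the relevant bands, and then to verify condition (3') of Theorem~\ref{thm:finrel} directly for each listed product. The unifying feature of all six products is that each factor has already been shown to be finitely related by Theorem~\ref{thm:Tn}, and that in each case the two factors generate an irreducible variety and its dual, whose join is precisely the variety generated by the product. So I would first record the general principle that if $S$ and $\ol{S}$ are the factors (or more generally two finitely related bands generating varieties $\XX$ and $\YY$), then the product generates $\XX\vee\YY$, and I would aim to leverage the word-problem descriptions from Theorem~\ref{xm-wp} for each join.

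\medskip

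Concretely, I would fix a product, say $P=A_m\times\ol{A_m}$, set $n\geq n_0$ sufficiently large (at least $\max(|P|+4,m+3)$), and take an essential operation $f:P^n\to P$ all of whose minors $f_{ij}$ are induced by words $\ww_{ij}$. By Lemma~\ref{lem:ess} the family $\mathcal{S}=\{\ww_{ij}\}$ is an essential $n$-scheme for the variety $\V$ generated by $P$. Projecting onto each coordinate, the scheme $\mathcal{S}$ is in particular a scheme for $\AA_m$ and for $\ol{\AA_m}$ separately; since each of $A_m$ and $\ol{A_m}$ is finitely related by Theorem~\ref{thm:Tn}, Theorem~\ref{thm:finrel} yields words $\uu$ and $\uu'$ from which $\mathcal{S}$ comes when viewed over $\AA_m$ and over $\ol{\AA_m}$, respectively. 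The task then is to manufacture a single word $\ww$ that simultaneously induces $f$ on both coordinates, i.e.\ such that $\ww\id\uu$ holds in $\AA_m$ and $\ww\id\uu'$ holds in $\ol{\AA_m}$, and such that all minors $\ww^{(pq)}$ agree with $\ww_{pq}$ in $\V$. Because the equational theory of $\V=\AA_m\vee\ol{\AA_m}$ is governed jointly by $h_m$ and $\ol{h}_m$ (by Theorem~\ref{xm-wp}), verifying $\ww^{(pq)}\id\ww_{pq}$ in $\V$ reduces to checking the two one-sided identities $h_m(\ww^{(pq)})=h_m(\ww_{pq})$ and $\ol{h}_m(\ww^{(pq)})=\ol{h}_m(\ww_{pq})$ separately, each of which follows from the corresponding one-sided statement we already have.

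\medskip

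The construction of the amalgamating word $\ww$ is where the technical content lies, and I expect it to be the main obstacle. The natural attempt is to form $\ww$ as a controlled concatenation that begins like $\uu$ (so that its $h_m$-invariant matches) and ends like $\uu'$ (so that its $\ol{h}_m$-invariant matches), using the splitting behaviour of the operators $s,\sigma,e,\eps$ recorded in Lemmas~\ref{stm} and~\ref{lem:spq}, much as the word $\ww=s(\ww_{k'l'})x_k\ut x_l\uh$ was assembled in the proof of Theorem~\ref{thm:Tn}. One must arrange that the prefix controlling $h_m$ and the suffix controlling $\ol{h}_m$ interact correctly through the middle letters $x_k,x_l$ determined by the associated permutations of the two coordinate schemes (which, by Lemma~\ref{lem:perm}, may differ and must be reconciled). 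The delicate point is that the two factors may impose \emph{different} associated permutations, so the head and tail structure cannot be read off from a single ordering; I would handle this by invoking Lemma~\ref{stm}(4), which expresses $\ol{t}_m$ in terms of $t_{m-1}$, to bridge the left and right invariants, and Corollary~\ref{cor:bwp} to absorb redundant repetitions of content. Once $\ww$ is built and the claim $\ww^{(pq)}\id\ww_{pq}$ in $\V$ is established for all $p,q$ avoiding the two distinguished positions, the pigeonhole argument from the end of Theorem~\ref{thm:Tn}'s proof applies verbatim to conclude $f=\ww^P$. The remaining mixed products such as $A_m\times\ol{B_{m-1}}$ and $B_m\times\ol{A_m}$ are handled by the same scheme, replacing the word functions $h_m,\ol{h}_m$ by the appropriate mix of $h$ and $i$ functions dictated by Theorem~\ref{xm-wp}, and using left-right duality to cut the number of genuinely distinct cases roughly in half.
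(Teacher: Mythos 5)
Your skeleton agrees with the paper's proof: decompose along the two coordinates, obtain from Theorem \ref{thm:Tn} words $\uu$ and $\vv$ from which the scheme $\mathcal{S}$ comes over each factor's variety, amalgamate them into a single word $\ww$, prove $\ww^{(ij)}\id\ww_{ij}$ in each factor's variety separately, and finish by pigeonhole. But your proposal stops exactly at the step you yourself flag as ``the main obstacle'': the amalgamating word is never actually constructed, and the sketch you give of it is misdirected. In the paper the construction is trivial: $\ww=\uu\vv$, plain concatenation. Since the scheme is essential, $c(\uu)=c(\vv)=X_n$, so every left invariant of $\ww$ (such as $s$, $\sigma$, $i_2$) equals that of $\uu$, and every right invariant (such as $e$, $\eps$, $\ol{i}_2$) equals that of $\vv$; in particular there are no ``middle letters $x_k,x_l$'' and no reconciliation of two associated permutations to worry about --- Lemma \ref{lem:perm} enters only to give $i_2(\ww^{(ij)})=i_2(\uu^{(ij)})=i_2(\ww_{ij})$.

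The genuine gap lies in the verification, where your claim that each one-sided identity ``follows from the corresponding one-sided statement we already have'' is false as stated. Writing $\XX_m$ for the variety of the left factor and $\ol{\YY_r}$ for that of the right factor, the invariant $t_m(\ww^{(ij)})$ is \emph{not} determined by $\uu^{(ij)}$ alone: by the recursion $t_m(\ww)=t_ms(\ww)\sigma(\ww)\ol{t}_{m-1}(\ww)$ (equivalently, Lemma \ref{stm}(2)), it involves a dual invariant at level $m-1$ of a word whose suffix structure is governed by $\vv^{(ij)}$, and all you know about $\vv^{(ij)}$ is the identity $\vv^{(ij)}\id\ww_{ij}$ in $\ol{\YY_r}$. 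That knowledge yields the needed equality $\ol{t}_{m-1}(\vv^{(ij)})=\ol{t}_{m-1}(\ww_{ij})$ precisely when $\ol{\XX_{m-1}}\subseteq\ol{\YY_r}$; dually, the check in $\ol{\YY_r}$ needs $t_{r-1}(\uu^{(ij)})=t_{r-1}(\ww_{ij})$, which is obtained from $\YY_r\subseteq\XX_m$ together with \cite[Lemma 3.5]{GP}. The containments $\ol{\XX_{m-1}}\subseteq\ol{\YY_r}\subseteq\ol{\XX_m}$ hold for exactly the six products listed in the statement --- this is why the theorem is not asserted for arbitrary products $T_m\times\ol{U_r}$ --- and the paper's proof hinges on them at every turn (including the separate small-index subcases $\AA_3$, $\ol{\BB_2}$, $\ol{\AA_3}$). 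Your proposal never identifies this mechanism, and your suggested substitutes (Lemma \ref{stm}(4) to ``bridge'' the sides, Corollary \ref{cor:bwp} to absorb repetitions) do not supply it; without it the factorwise verification cannot be completed, no matter how $\ww$ is assembled.
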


\begin{rmk}
Note that this theorem does not follow automatically from the previous one, as it was shown in \cite[Example 6.3]{DJPSz}
that the finitely related property is in general not preserved by direct products.
\end{rmk}

\begin{proof}
Generally, we are going to argue that a finite band of the form $T_m\times\ol{U_r}$ is finitely related,
where $T,U\in\{A,B\}$ and $r\in\{m-1,m\}$, with combinations as in the formulation; since the third case is
dual to the second and sixth case to the fifth, these two may be safely omitted by our previous left-right
duality remarks. Note that with these restrictions if $T_m$ generates $\XX_m$ and $\ol{U_r}$ generates $\ol{\YY_r}$
($\XX,\YY\in\{\AA,\BB\}$) then $\ol{\YY_r}$ is always a subvariety of $\ol{\XX_m}$ containing $\ol{\XX_{m-1}}$.
Equivalently, $\XX_m$ is a subvariety of $\YY_{r+1}$ containing $\YY_r$.
Also, $\BB_2$ can be assumed to be a subvariety of both $\XX_m$ and $\YY_r$.

So, let $n_0=\max(|T_m||U_r|+4,m+3)$ and assume that for some $n\geq n_0$, $f$ is an 
$n$-ary term operation of  $T_m\times\ol{U_r}$ depending on all of its variables, with the property that 
for all $1\leq i<j\leq n$, $f_{ij}$ is induced by a word $\ww_{ij}\in X_n^+$. 

\begin{cla}
There exist operations $g:T_m^n\to T_m$ and $h:\ol{U_r}^n\to\ol{U_r}$ such that
$$
f((a_1,b_1),\dots,(a_n,b_n)) = \left(g(a_1,\dots,a_n),h(b_1,\dots,b_n)\right)
$$
for all $a_1,\dots,a_n\in T_m$ and $b_1,\dots,b_n\in\ol{U_r}$, and, furthermore, for all $1\leq i<j\leq n$, 
the operations $g_{ij}$ and $h_{ij}$ are induced by $\ww_{ij}$ on $T_m$ and $\ol{U_r}$, respectively.
\end{cla}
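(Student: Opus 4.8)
The plan is to show that $f$ respects the product decomposition, i.e.\ that its first coordinate depends only on the first coordinates of its inputs and similarly for the second. The natural definition is to set
$$
g(a_1,\dots,a_n) = \pi_1\bigl(f((a_1,b_1),\dots,(a_n,b_n))\bigr), \qquad
h(b_1,\dots,b_n) = \pi_2\bigl(f((a_1,b_1),\dots,(a_n,b_n))\bigr),
$$
where $\pi_1,\pi_2$ are the two projections of $T_m\times\ol{U_r}$; the real content is that these are well defined, meaning the right-hand sides do not depend on the auxiliary coordinates $b_i$ (resp.\ $a_i$). First I would verify this well-definedness. The hypothesis is that every minor $f_{ij}$ is induced by the word $\ww_{ij}$; since word operations on a direct product act coordinatewise (a word induces on $T_m\times\ol{U_r}$ the pair of operations it induces on each factor), each $f_{ij}$ already decomposes as $f_{ij}=\langle (\ww_{ij})^{T_m}, (\ww_{ij})^{\ol{U_r}}\rangle$. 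So the obstacle is solely about $f$ itself, not its minors.

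The key step is a pigeonhole argument reducing any evaluation of $f$ to an evaluation of some minor $f_{pq}$, exactly as in the final paragraph of the proof of Theorem~\ref{thm:Tn}. Concretely, to see that $g$ is well defined I would fix two input tuples that agree in their first coordinates, $(a_1,b_1),\dots,(a_n,b_n)$ and $(a_1,b_1'),\dots,(a_n,b_n')$, and show $f$ gives the same first coordinate on both. Since $n\geq n_0 > |T_m||U_r|+3$, the pigeonhole principle yields indices $p<q$ with $(a_p,b_p)=(a_q,b_q)$ on the first tuple; but to handle both tuples simultaneously I would apply pigeonhole to the pairs $\bigl((a_p,b_p),(a_p,b_p')\bigr)$ ranging over $T_m\times\ol{U_r}\times\ol{U_r}$, whose cardinality is $|T_m||U_r|^2$. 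To keep the bound at $|T_m||U_r|+4$ I would instead argue coordinatewise: working in the factor $T_m$, pigeonhole on the $a_i\in T_m$ alone gives $p<q$ with $a_p=a_q$ once $n>|T_m|$, and then $f_{pq}=\langle(\ww_{pq})^{T_m},\dots\rangle$ computes the first coordinate of $f$ on any tuple whose $p$th and $q$th first coordinates agree, as a function of the first coordinates only. Thus $\pi_1(f(\cdots))=(\ww_{pq})^{T_m}(a_1,\dots,a_n)$ depends solely on $(a_1,\dots,a_n)$, which is precisely the well-definedness of $g$; the dual computation in $\ol{U_r}$ handles $h$.

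Having established that $g$ and $h$ are genuine operations on $T_m$ and $\ol{U_r}$, the product formula $f=\langle g,h\rangle$ holds by construction, which gives the first assertion of the claim. For the second assertion, I would compute the minor $g_{ij}$ directly: for any tuple, choosing the pigeonhole pair $p<q$ with $a_p=a_q$ as above shows $g(a_1,\dots,a_n)=(\ww_{pq})^{T_m}(a_1,\dots,a_n)$, and comparing this across the compatible minors via condition (C1) of the scheme $\{\ww_{ij}\}$ (which holds by Lemma~\ref{lem:ess}(1)) forces $g_{ij}=(\ww_{ij})^{T_m}$; symmetrically $h_{ij}=(\ww_{ij})^{\ol{U_r}}$. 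I expect the main obstacle to be bookkeeping the pigeonhole bound: one must route the counting through a single factor at a time so that $|T_m||U_r|+4$ suffices rather than a larger product, and one must be careful that the pigeonhole pair $(p,q)$ chosen for a given evaluation may differ from tuple to tuple, so the proof that $g$ is well defined must produce, for each pair of compatible tuples, indices at which they already agree — which is exactly what agreement in the relevant factor guarantees.
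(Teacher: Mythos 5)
There is a genuine gap, and it sits exactly at the step you introduced to rescue the pigeonhole bound. The identity $f(\ol{c})=f_{pq}(\ol{c})$ is available only when the \emph{full} product entries at positions $p$ and $q$ coincide, i.e.\ $(a_p,b_p)=(a_q,b_q)$: by definition, $f_{pq}(\ol{c})=f(\ol{c}\,')$, where $\ol{c}\,'$ is $\ol{c}$ with its $p$th entry replaced by its $q$th entry, so if $a_p=a_q$ but $b_p\neq b_q$ then $\ol{c}\,'\neq\ol{c}$ and nothing in the hypotheses relates $f(\ol{c})$ to $f(\ol{c}\,')$. Your assertion that ``$f_{pq}$ computes the first coordinate of $f$ on any tuple whose $p$th and $q$th \emph{first} coordinates agree'' is precisely the statement that $\pi_1 f$ does not depend on the second coordinates --- which is the well-definedness you are trying to prove. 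So the coordinatewise argument is circular: that assertion is a consequence of the Claim, not a tool available for proving it. Your first idea (pigeonhole on the triples $(a_i,b_i,b_i')$, and dually $(a_i,a_i',b_i)$) is sound, but as you note it needs $n>|T_m||U_r|^2$ (resp.\ $n>|T_m|^2|U_r|$). That, however, is not a real obstacle: Theorem \ref{thm:finrel} only asks for \emph{some} finite $n_0$, and nothing in the proof of Theorem \ref{thm:prod} breaks if $n_0$ is enlarged to, say, $\max(|T_m|^2|U_r|^2+4,\,m+3)$; your ``fix'' of the bound is what introduced the error.

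For comparison, the paper avoids the well-definedness issue entirely by not defining $g$ and $h$ as projections of $f$. Since $\mathcal{S}=\{\ww_{ij}\}$ is an $n$-scheme for the variety generated by each factor separately (Lemma \ref{lem:ess}(1)), Lemma 2.6 of \cite{DJPSz} --- the same lemma already used in the proof of Lemma \ref{lem:perm} --- directly yields unique operations $g:T_m^n\to T_m$ and $h:\ol{U_r}^n\to\ol{U_r}$ with $g_{ij}=\ww_{ij}^{T_m}$ and $h_{ij}=\ww_{ij}^{\ol{U_r}}$; there the consistency conditions of the scheme carry the entire well-definedness burden. Then a single pigeonhole on the pairs $(a_i,b_i)$, which needs only $n>|T_m||U_r|$, gives for each input tuple indices $p<q$ with $(a_p,b_p)=(a_q,b_q)$, whence
$$
f(\ol{c}) = f_{pq}(\ol{c}) = \ww_{pq}^{T_m\times\ol{U_r}}(\ol{c})
= \left(\ww_{pq}^{T_m}(\ol{a}),\ww_{pq}^{\ol{U_r}}(\ol{b})\right)
= \left(g_{pq}(\ol{a}),h_{pq}(\ol{b})\right) = \left(g(\ol{a}),h(\ol{b})\right),
$$
the last equality because $a_p=a_q$ and $b_p=b_q$. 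So to repair your write-up, either keep your projection-based definition and adopt the larger $n_0$ with the triple pigeonhole, or (better, and matching the paper) construct $g$ and $h$ from the schemes via the cited lemma and verify the product formula afterwards.
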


\begin{proof}[Proof of Claim.]
By Lemma \ref{lem:ess}(1), $\mathcal{S}=\{\ww_{ij}:\ 1\leq i<j\leq n\}$ is an $n$-scheme of words for the variety 
generated by $T_m\times\ol{U_r}$, and thus for each of the varieties generated individually by $T_m$ and $\ol{U_r}$. 
Now by \cite[Lemma 2.6]{DJPSz} there are (unique) $n$-ary operations $g$ and $h$ on $T_m$  and $\ol{U_r}$, 
respectively, such that $g_{ij}=\ww_{ij}^{T_m}$ and $h_{ij}=\ww_{ij}^{\ol{U_r}}$ for all $1\leq i<j\leq n$.

As $n>|T_m||U_r|$, for arbitrary $(a_1,b_1),\dots,(a_n,b_n)\in T_m\times\ol{U_r}$ there are $p,q\in\nn$, $p<q$, 
such that $(a_p,b_p)=(a_q,b_q)$. Hence, 
\begin{align*}
f((a_1,b_1),\dots,(a_n,b_n)) &= f_{pq}((a_1,b_1),\dots,(a_n,b_n)) \\
&= \ww_{pq}^{T_m\times\ol{U_r}}((a_1,b_1),\dots,(a_n,b_n)) \\
&= \left(\ww_{pq}^{T_m}(a_1,\dots,a_n),\ww_{pq}^{\ol{U_r}}(b_1,\dots,b_n)\right) \\
&= (g_{pq}(a_1,\dots,a_n), h_{pq}(b_1,\dots,b_n)) \\
&= (g(a_1,\dots,a_n), h(b_1,\dots,b_n)).
\end{align*}
By the very construction of $g$ and $h$, the claim follows.
\phantom\qedhere\end{proof}

By the choice of $n_0$ and Theorem \ref{thm:Tn} (in fact, by $n$ satisfying the assumptions in its proof),
both $g$ and $h$ are induced by words, say $\uu$ and $\vv$. We claim that $f$ is induced on $T_m\times\ol{U_r}$
by the word $\ww=\uu\vv$. We are going to prove that for all $1\leq i<j\leq n$, both $T_m$ and $\ol{U_r}$
satisfy $\ww^{(ij)}\id\ww_{ij}$. Similarly as in the final part of the proof of the previous theorem, this
will suffice to establish the theorem, because then for arbitrary $(a_1,b_1),\dots,(a_n,b_n)\in T_m\times\ol{U_r}$
the pigeonhole principle provides $p<q$ with $(a_p,b_p)=(a_q,b_q)$, so that for $\ol{c}=((a_1,b_1),\dots,(a_n,b_n))$,
$\ol{a}=(a_1,\dots,a_n)$ and $\ol{b}=(b_1,\dots,b_n)$ we have
\begin{align*}
f(\ol{c}) &= f_{pq}(\ol{c}) = (g_{pq}(\ol{a}),h_{pq}(\ol{b}))=(\ww_{pq}^{T_m}(\ol{a}),\ww_{pq}^{\ol{U_r}}(\ol{b}))\\
&= ((\ww^{(pq)})^{T_m}(\ol{a}),(\ww^{(pq)})^{\ol{U_r}}(\ol{b}))\\
&=(\ww^{(pq)})^{T_m\times\ol{U_r}}(\ol{c}) = (\ww^{T_m\times\ol{U_r}})_{pq}(\ol{c})=\ww^{T_m\times\ol{U_r}}(\ol{c}),
\end{align*}
showing that $f=\ww^{T_m\times\ol{U_r}}$. 

Let us start by noting that the scheme $\mathcal{S}$ is essential (by Lemma \ref{lem:ess}(2)), which immediately 
implies $c(\uu)=c(\vv)=X_n$, so $s(\ww)=s(\uu)$, $\sigma(\ww)=\sigma(\uu)$, $\eps(\ww)=\eps(\vv)$ and $e(\ww)=e(\vv)$.
This yields $s(\ww^{(ij)})=s(\uu^{(ij)})$ and $e(\ww^{(ij)})=e(\vv^{(ij)})$.
Furthermore, $i_2(\ww)=i_2(\uu)$, implying $i_2(\ww^{(ij)})=i_2(\uu^{(ij)})=i_2(\ww_{ij})$ by Lemma \ref{lem:perm}
because $\XX_m$ contains $\BB_2$. Thus $\sigma(\ww^{(ij)})=\sigma(\uu^{(ij)})=\sigma(\ww_{ij})$ and, dually, 
$\eps(\ww^{(ij)})=\eps(\vv^{(ij)})=\eps(\ww_{ij})$. Also, if $t_m$ is the word function corresponding to the variety 
$\XX_m$ (in the sense of Theorem \ref{xm-wp}), then we know, by construction of words $\uu$ and $\vv$, that $\XX_m$ 
satisfies $\uu^{(ij)}\id\ww_{ij}$ --- so that $t_m(\uu^{(ij)})=t_m(\ww_{ij})$ --- while $\ol{\YY_r}$ satisfies 
$\vv^{(ij)}\id\ww_{ij}$. By assumptions made earlier in this proof, the latter identity holds in $\ol{\XX_{m-1}}$, 
implying $\ol{t}_{m-1}(\vv^{(ij)})=\ol{t}_{m-1}(\ww_{ij})$. 

Note that we have $\ol{h}_2(\ww^{(ij)})=\ol{h}_2(\vv^{(ij)})$ since $e(\ww)=e(\vv)$.
So, if $\XX_m=\AA_3$ we deduce, by Lemma \ref{stm}(1),
\begin{align*}
h_3(\ww^{(ij)}) & = h_3s(\uu^{(ij)})\sigma(\uu^{(ij)})\ol{h}_2(\ww^{(ij)})\\
& = sh_3(\uu^{(ij)})\sigma(\uu^{(ij)})\ol{h}_2(\vv^{(ij)})\\
& = sh_3(\ww_{ij})\sigma(\ww_{ij})\ol{h}_2(\ww_{ij})\\
& = h_3s(\ww_{ij})\sigma(\ww_{ij})\ol{h}_2(\ww_{ij})=h_3(\ww_{ij}).
\end{align*}
On the other hand, if $\XX_m\in\{\AA_k:\ k\geq 4\}\cup\{\BB_k:\ k\geq 3\}$ then by items (1) and (2) of
Lemma \ref{stm} we obtain
\begin{align*}
bt_m(\ww^{(ij)}) &= b\left[t_ms(\uu^{(ij)})\sigma(\uu^{(ij)})\eps(\vv^{(ij)})\ol{t}_{m-1}e(\vv^{(ij)})\right]\\
& = b\left[st_m(\uu^{(ij)})\sigma(\uu^{(ij)})\eps(\vv^{(ij)})e\ol{t}_{m-1}(\vv^{(ij)})\right]\\
& = b\left[st_m(\ww_{ij})\sigma(\ww_{ij})\eps(\ww_{ij})e\ol{t}_{m-1}(\ww_{ij})\right]\\
& = b\left[t_ms(\ww_{ij})\sigma(\ww_{ij})\eps(\ww_{ij})\ol{t}_{m-1}e(\ww_{ij})\right] = bt_m(\ww_{ij}).
\end{align*}
But then Lemma \ref{stm}(3) implies that $t_m(\ww^{(ij)})=t_m(\ww_{ij})$, i.e.\ that $\XX_m$ (and thus $T_m$) 
satisfies $\ww^{(ij)}\id\ww_{ij}$. 

The proof that $\ol{\YY_r}$ satisfies $\ww^{(ij)}\id\ww_{ij}$ is similar, albeit with slight differences. Let now $t_r$
be the word function corresponding to $\YY_r$ in the sense of Theorem \ref{xm-wp}. The fact that $\XX_m$ (and so $\YY_r$) 
satisfies $\uu^{(ij)}\id\ww_{ij}$ implies $t_r(\uu^{(ij)})=t_r(\ww_{ij})$. Furthermore, \cite[Lemma 3.5]{GP} tells us
that $t_{r-1}(\uu^{(ij)})=t_{r-1}(\ww_{ij})$ holds as well, provided $r\geq 3$. In turn, $\ol{\YY_r}$ satisfies 
$\vv^{(ij)}\id\ww_{ij}$, thus $\ol{t}_r(\vv^{(ij)})=\ol{t}_r(\ww_{ij})$.

Now we have three subcases to consider. First, let $\ol{\YY_r}=\ol{\BB_2}$ (which can happen, just as the next subcase, 
only if $\XX_m=\AA_3$). Since $c(\uu)=c(\vv)=X_n$, we have $\ol{i}_2(\ww^{(ij)})=\ol{i}_2(\uu^{(ij)}\vv^{(ij)})=
\ol{i}_2(\vv^{(ij)})$. On the other hand, we have already concluded that $\ol{\YY_r}$ satisfies 
$\vv^{(ij)}\id\ww_{ij}$, so $\ol{i}_2(\vv^{(ij)})=\ol{i}_2(\ww_{ij})$ yielding $\ol{i}_2(\ww^{(ij)})=
\ol{i}_2(\ww_{ij})$, as required. Our second subcase is $\ol{\YY_r}=\ol{\AA_3}$, when $h_2(\ww^{(ij)})=
h_2(\uu^{(ij)})$ because of $s(\ww)=s(\uu)$. By items (1) and (4) of Lemma \ref{stm} we have:
\begin{align*}
\ol{h}_3(\ww^{(ij)}) & = h_2(\ww^{(ij)})\eps(\vv^{(ij)})\ol{h}_3e(\vv^{(ij)})\\
& = h_2(\uu^{(ij)})\eps(\vv^{(ij)})e\ol{h}_3(\vv^{(ij)})\\
& = h_2(\ww_{ij})\eps(\ww_{ij})e\ol{h}_3(\ww_{ij})\\
& = h_2(\ww_{ij})\eps(\ww_{ij})\ol{h}_3e(\ww_{ij})=\ol{h}_3(\ww_{ij}).
\end{align*}
Finally, let $\ol{\YY_r}\in\{\ol{\AA_k}:\ k\geq 4\}\cup\{\ol{\BB_k}:\ k\geq 3\}$. Then, by invoking parts (1) and (2)
of Lemma \ref{stm} once again, we deduce
\begin{align*}
b\ol{t}_r(\ww^{(ij)}) &= b\left[t_{r-1}s(\uu^{(ij)})\sigma(\uu^{(ij)})\eps(\vv^{(ij)})\ol{t}_re(\vv^{(ij)})\right]\\
& = b\left[st_{r-1}(\uu^{(ij)})\sigma(\uu^{(ij)})\eps(\vv^{(ij)})e\ol{t}_r(\vv^{(ij)})\right]\\
& = b\left[st_{r-1}(\ww_{ij})\sigma(\ww_{ij})\eps(\ww_{ij})e\ol{t}_r(\ww_{ij})\right]\\
& = b\left[t_{r-1}s(\ww_{ij})\sigma(\ww_{ij})\eps(\ww_{ij})\ol{t}_re(\ww_{ij})\right] = b\ol{t}_r(\ww_{ij}),
\end{align*}
whence Lemma \ref{stm}(3) implies that $\ol{t}_r(\ww^{(ij)})=\ol{t}_r(\ww_{ij})$.

Hence, both $\XX_m$ and $\ol{\YY_r}$ (with restrictions on $m,r$ as described at the beginning of the proof)
satisfy the identity $\ww^{(ij)}\id\ww_{ij}$. As remarked earlier, this completes the proof that $f$ is 
induced by $\ww$ and confirms the theorem.
\end{proof}

The proof of Theorem \ref{main} is now immediate: if $S$ is a finite band then it generates the same variety
as one of the bands covered by \cite[Theorem 6.2]{Ma}, Theorem \ref{thm:Tn} and Theorem \ref{thm:prod}. By 
\cite[Theorem 2.11]{DJPSz}, $S$ is finitely related.

\begin{ackn}
The author is grateful to a thorough and perceptive referee for a handful of comments which significantly 
improved the paper.
\end{ackn}



\begin{thebibliography}{99}
\frenchspacing

\bibitem{Ai}
E.~Aichinger, %
`Constantive Mal'cev clones on finite sets are finitely related', %
\emph{Proc. Amer. Math. Soc.} \textbf{138} (2010), 3501--3507.

\bibitem{AMM}
E.~Aichinger, P.~Mayr, and R.~McKenzie, %
`On the number of finite algebraic structures', %
\emph{J. Eur. Math. Soc.} \textbf{16} (2014), 1673--1686.

\bibitem{Ba}
L.~Barto, %
`Finitely related algebras in congruence distributive varieties have near unanimity terms', %
\emph{Canad. J. Math.} \textbf{65} (2013), 3--21.

\bibitem{Ba-JEMS}
L.~Barto, %
`Finitely related algebras in congruence modular varieties have few subpowers', %
\emph{J. Eur. Math. Soc.} (to appear). Available online at:\\ %
\texttt{http://www.karlin.mff.cuni.cz/$\sim$barto/Articles/ValerioteConjecture.pdf}

\bibitem{Few}
J.~Berman, P.~Idziak, P.~Markovi\'c, R.~McKenzie, M.~Valeriote, and R.~Willard, %
`Varieties with few subalgebras of powers'
\emph{Trans. Amer. Math. Soc.} \textbf{362} (2010), 1445--1473.

\bibitem{Bi}
A.~P.~Biryukov, %
`Varieties of idempotent semigroups', %
\emph{Algebra i Logika} \textbf{9} (1970), 255--273 (in Russian).

\bibitem{BJK}
A.~Bulatov, P.~Jeavons, and A.~Krokhin, %
`Classifying the complexity of constraints using finite algebras', %
\emph{SIAM J. Comput.} \textbf{34} (2005), 720--742.

\bibitem{BS}
S.~Burris and H.~P.~Sankappanavar, %
\emph{A Course in Universal Algebra}, %
Graduate Texts in Mathematics (Springer-Verlag, New York, 1981).

\bibitem{DJPSz}
B.~A.~Davey, M.~G.~Jackson, J.~G.~Pitkethly, and Cs.~Szab\'o, %
`Finite degree: algebras in general and semigroups in particular', %
\emph{Semigroup Forum} \textbf{83} (2011), 89--110.

\bibitem{Fe}
C.~F.~Fennemore, %
`All varieties of bands. I, II', %
\emph{Math. Nachr.} \textbf{48} (1971), 237--252; \emph{ibid.} 253--262.

\bibitem{Ge}
J.~A.~Gerhard, %
`The lattice of equational classes of idempotent semigroups', %
\emph{J. Algebra} \textbf{15} (1970), 195--224.

\bibitem{GP1}
J.~A.~Gerhard and M.~Petrich, %
`Free bands and free $^\ast$-bands', %
\emph{Glasgow Math. J.} \textbf{28} (1986), 161--179.

\bibitem{GP}
J.~A.~Gerhard and M.~Petrich, %
`Varieties of bands revisited', %
\emph{Proc. London Math. Soc. (3)} \textbf{58} (1989), 323--350.

\bibitem{Ja}
S.~V.~Jablonski\u{\i}, %
`The structure of the upper neighbourhood for predicately describable classes in $P_k$',
\emph{Dokl. Akad. Nauk SSSR} \textbf{218} (1974), 304--307 (Russian). %
English traslation: \emph{Sov. Math. Dokl.} \textbf{15} (1974), 1353--1356.

\bibitem{GAIA}
M.~Jackson (ed.), %
`General Algebra and Its Applications 2013: Problem Session', %
\emph{Algebra Univers.} \textbf{74} (2015), 9--16.

\bibitem{MMM}
P.~Markovi\'c, M.~Mar\'oti, and R.~McKenzie, %
`Finitely related clones and algebras with cube terms', %
\emph{Order} \textbf{29} (2012), 345--359.

\bibitem{Ma}
P.~Mayr, %
`On finitely related semigroups', %
\emph{Semigroup Forum} \textbf{86} (2013), 613--633.

\bibitem{Ro}
B.~A.~Romov, %
`Local characterizations of Post algebras, I', %
\emph{Kibernetika} \textbf{5} (1976), 38--45.

\bibitem{RS}
I.~G.~Rosenberg, \'A.~Szendrei, %
`Degrees of clones and relations', %
\emph{Houston J. Math.} \textbf{9} (1983), 545--580.

\bibitem{OSa}
O.~Sapir, %
`The variety of idempotent semigroups is inherently non-finitely generated' %
\emph{Semigroup Forum} \textbf{71} (2005), 140--146.

\end{thebibliography}
\end{document}